\newtheorem{theorem}{Theorem}[section]
\newtheorem{lemma}[theorem]{Lemma}
\newcommand{\be}{\begin{equation}}
\newcommand{\ee}{\end{equation}}
\newcommand{\ol}{\overline}
\newcommand{\ul}{\underline}
\newcommand{\tu}{\tilde{u}}
\newcommand{\ue}{u^\epsilon}
\newcommand{\lt}{\left}
\newcommand{\rt}{\right}
\newcommand{\goto}{\rightarrow}
\newcommand{\e}{\epsilon}
\newcommand{\s}{\sigma}
\newcommand{\RNum}[1]{\uppercase\expandafter{\romannumeral #1\relax}}
\theoremstyle{definition}
\newtheorem{defin}[theorem]{Definition}
\newtheorem{remark}[theorem]{Remark}
\newtheorem{question}[theorem]{Question}
\numberwithin{equation}{section}
\begin{document}
\setlength{\baselineskip}{1.2\baselineskip}

\title[Non-convexity of level sets]
{Non-convexity of level sets for $k$-Hessian equations in convex ring}

\author{Zhizhang Wang}
\address{School of Mathematical Science, Fudan University, Shanghai, China}
\email{zzwang@fudan.edu.cn}
\author{Ling Xiao}
\address{Department of Mathematics, University of Connecticut, Storrs, Connecticut 06269}
\email{ling.2.xiao@uconn.edu}
\thanks{The first author is supported by NSFC Grants No.12141105.}

\begin{abstract}
In this paper we construct explicit examples that show the sublevel sets of the solution of a $k$-Hessian equation defined
on a convex ring do not have to be convex.
\end{abstract}

\maketitle

\section{Introduction}
\label{sec0}

A domain $\Omega\subset\mathbb R^n$ is called a {\it  convex ring} if
$\Omega=\Omega_1\setminus\overline\Omega_2,$
where $\Omega_1$ and $\Omega_2$ are two bounded convex domains in $\mathbb R^n$ such that
$\overline\Omega_2\subset\Omega_1.$ In this paper, we will concern with the non-convexity of sublevel sets for the solution $u$ to
the $k$-Hessian equation defined on a convex ring
\be\label{eq0.0}
\left\{
\begin{aligned}
F_k(D^2u)&=C>0\,\, &\mbox{in $\Omega=\Omega_1\setminus\ol\Omega_2,$}\\
u&=0\,\,&\mbox{on $\partial\Omega_1,$}\\
u&=-1\,\,&\mbox{on $\partial\Omega_2.$}
\end{aligned}
\right.
\ee
Here, $0<C\leq C_0$, $C_0=C_0(\Omega)$ is a positive constant depending on the domain $\Omega,$  and the $k$-Hessian operator $F_k$ is defined by
\[F_k[u]=\sigma_k(D^2u)=\sigma_k(\lambda(D^2 u)),\]
where $\lambda=(\lambda_1, \cdots, \lambda_n)$ denotes the eigenvalues of the Hessian matrix of $D^2u,$ and $\sigma_k$ is the $k$-th elementary symmetric function on
$\mathbb R^n.$

It is believed that solutions of boundary value problems for elliptic equations often inherit important geometric properties of the domain with the influence of the structures of the corresponding equations. Therefore, a typical question to ask is
\begin{question}
\label{question1}
If $u$ is a non-positive solution to equation \eqref{eq0.0}, is it true that the sublevel set of $u,$
that is, $\{x\mid u\leq c\}$ are all convex?
\end{question}

For $k=n$, since the admissible solution for \eqref{eq0.0} is strictly convex, it is trivial that the above question has a positive answer. However, for $k<n$, the admissible solutions are strictly $k$-convex, the above question becomes very interesting.

In the literature, for general nonlinear equations \be\label{eq0.1*}F(D^2u, Du, u,x)=0,\ee Question \ref{question1} has been studied intensively.

The convexity of level-sets of solutions for harmonic equations was first studied by Gabriel \cite{Gab}. Lewis \cite{Lew} extended Gabriel's results to $p$-harmonic functions. Caffarelli and Spruck \cite{CS} treated this problem for inhomogeneous Laplace equations that are in connection with free boundary problems. Kawhol \cite{Kaw} proposed an approach of using quasi-concave envelop to study the convexity of the level-set of solutions to certain equations. Colesanti-Salani \cite{CoSa} carried out this approach for a class of elliptic equations. The technique was extended by Cuoghi-Salani \cite{CuSa} and Longinetti-Salani \cite{LoSa} for general equations \eqref{eq0.1*} defined in convex ring under various structure conditions on $F$. In \cite{BLS}, Bianchini-Longinetti-Salani furthered this technique and proved the convexity of level-sets for solutions to \eqref{eq0.1*} with milder structure conditions on $F$.

A different approach to study the convexity of level-sets of solutions is to use the microscopic convexity principle. In particular, the constant rank theorem for the second fundamental forms of level sets of solutions to certain type of quasilinear equations was established by Korevaar \cite{Kor}, see also Xu \cite{Xu}. For $p$-harmonic function, the corresponding constant rank theorem is proved by Ma-Ou-Zhang \cite{MOZ}. More specifically, they gave a positive lower bound for the Gauss curvature of the convex level set of $p$-harmonic functions that depends on the Gauss curvature of $\partial\Omega$. In \cite{CMY}, Chang-Ma-Yang proved a similar result for inhomogeneous Laplace equations. Later, Bian-Guan-Ma-Xu \cite{BGMX} and  Guan-Xu \cite{GX} gave a lower bound for the second fundamental form of the level surface of solutions to \eqref{eq0.1*} in convex ring for a large class of elliptic operators $F$ by establishing the constant rank theorem.

It is equally interesting to find examples of solutions of PDEs in convex ring that have non-convex level-sets. When $n=2,$ Monneau-Shahgholian \cite{MoSh} showed that there exists a solution $u$ to the equation $\Delta u=f(u)$ defined in a convex ring $\Omega\subset\mathbb R^2$ with level-sets not all convex. Later, Hamel-Nadirashvili-Sire \cite{HNS16} constructed examples for the same equation but in arbitrary dimensions. Moreover, the conditions that the function $f$ needs to satisfy in \cite{HNS16} are more relaxed than in \cite{MoSh}.  However, to the best of the authors knowledge, it seems that there is no such examples for fully nonlinear equations in literature. In this paper, we extend the result of \cite{HNS16} to $k$-Hessian equations \eqref{eq0.0} and construct such examples.

Before we state our main result, we need the following definition.
\begin{defin}
\label{def1.0}
For a domain $\Omega\subset\mathbb R^{n}$, a function $u\in C^2(\Omega)$
is called strictly $k$-convex if the
eigenvalues $\lambda(D^2u)=(\lambda_1, \cdots, \lambda_n)$ of the hessian $D^2u$ is in $\Gamma_k$ for all $x\in\Omega,$ where
$\Gamma_k$ is the Garding's cone
\[\Gamma_k=\{\lambda\in\mathbb R^n\mid \sigma_j(\lambda)>0, j = 1,\cdots , k\}.\]
\end{defin}

\label{sub1.1}
Let us consider the $k$-Hessian equations
in a convex ring $\Omega=\Omega_1\setminus\overline\Omega_2,$
\be\label{k-hessian}
\left\{
\begin{aligned}
\sigma_k(D^2 u)&=1\,\, &\mbox{in $\Omega=\Omega_1\setminus\ol\Omega_2,$}\\
u&=0\,\,&\mbox{on $\partial\Omega_1,$}\\
u&=-M\,\,&\mbox{on $\partial\Omega_2,$}
\end{aligned}
\right.
\ee
where $M>0$ is a positive real number. For any classical solution $u$ of \eqref{k-hessian}, we define the continuous function
$\tu\in C(\ol\Omega_1)$ by
\[
\tu=\left\{
\begin{aligned}
& u(x)\,\,&\mbox{if $x\in\ol\Omega,$}\\
&-M\,\,&\mbox{if $x\in\Omega_2.$}
\end{aligned}
\right.
\]
We say that $u$ is {\it quasiconvex} in $\Omega$ if $\tu$ is so in $\Omega_1,$ that is,
if the sublevel sets
\[\bar\Omega^\lambda:=\{x\in\Omega_1: \tu(x)\leq\lambda\}\]
are convex for all $\lambda\leq 0.$

Our main result is the following.
\begin{theorem}
\label{thm1}
Let $\Omega_1$ be any smooth bounded convex domain in $\mathbb R^n.$ Then there exits a constant $M_1=M_1(\Omega_1)>0$
such that for all $M>M_1,$ there are some smooth convex rings $\Omega=\Omega_1\setminus\ol\Omega_2$ for which problem \eqref{k-hessian} with $n\geq 2k$
has a unique solution $u$ that is NOT quasiconvex.
\end{theorem}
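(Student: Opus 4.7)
The plan is to exploit the hypothesis $n\geq 2k$, which widens the admissible cone $\Gamma_k$ enough to accommodate Hessians with several negative eigenvalues, and to use this to build an explicit $k$-convex profile $u^*$ on $\R^n$ with hyperboloidal (non-convex) sublevel sets. The goal is then to force the solution of \eqref{k-hessian} to inherit this non-convexity for an appropriately chosen $\Omega_2$ and large $M$. The argument has two moves: (a) a rescaling reduction to the $k$-capacity problem as $M\to\infty$, and (b) an explicit non-convex model driven by $u^*$.

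For move (a), set $w := (u+M)/M\in[0,1]$. Then $w$ satisfies $\sigma_k(D^2 w) = M^{-k}$ in $\Omega$ with $w=0$ on $\partial\Omega_2$ and $w=1$ on $\partial\Omega_1$. Interior $C^{2,\alpha}$ estimates for admissible $k$-convex solutions, together with the standard comparison principle, show that as $M\to\infty$ the family $w_M$ converges locally uniformly in $\Omega$ to the unique admissible $w_\infty$ solving $\sigma_k(D^2 w_\infty)=0$ with the same boundary data, i.e., the $k$-capacity potential of $\Omega_2$ in $\Omega_1$. Since the level sets of $u_M$ coincide with those of $w_M$, it suffices to exhibit a smooth convex $\Omega_2\subset\Omega_1$ for which some level set of $w_\infty$ is non-convex: a chord witnessing a strict inequality $w_\infty(\tfrac{y_1+y_2}{2}) > w_\infty(y_i)$ persists under uniform convergence, and yields non-quasiconvexity of $u_M$ for every $M\geq M_1(\Omega_1)$.

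For move (b), the explicit model is
\[
u^*(x)=\tfrac{A}{2}(x_1^2+\cdots+x_{n-k}^2)-\tfrac{B}{2}(x_{n-k+1}^2+\cdots+x_n^2),
\]
where $A,B>0$ are tuned so that the eigenvalue tuple $(\underbrace{A,\dots,A}_{n-k},\underbrace{-B,\dots,-B}_{k})$ lies in $\Gamma_k$. Expanding $\sigma_j$ in powers of $B/A$ shows that this is possible exactly when $n-k\geq k$: the leading (in $A$) term of $\sigma_j$ is $\binom{n-k}{j}A^j$, positive precisely when $n-k\geq j$. The sublevel sets $\{u^*\leq c\}$ for $c<0$ are two-sheeted hyperboloidal regions, manifestly non-convex. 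Given any prescribed $\Omega_1$, I would choose $\Omega_2$ to be a smooth strictly convex body inside $\Omega_1$ whose boundary tracks, up to second order on a chosen arc, a convex piece of a sublevel set of $u^*$ lying in the hyperboloidal region, and build a barrier from a regularisation of $u^*$ that forces $w_\infty$ to inherit the saddle directions of $u^*$ in its intermediate level sets. The principal obstacle is this last matching step: making $\Omega_2$ smooth and globally convex while the ambient level-set geometry of $u^*$ remains non-convex is delicate, and it is precisely $n\geq 2k$ that makes it possible, simultaneously providing enough positive eigenvalue directions in $u^*$ to secure global convexity of $\partial\Omega_2$ and enough room in $\Gamma_k$ for the $k$ saddle directions that drive the non-convex geometry.
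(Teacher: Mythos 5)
Your plan has a genuine gap at its crux, and the limiting regime you chose works against you. In move (a) you fix $\Omega_2$ and send $M\to\infty$, so the rescaled problem degenerates to the homogeneous equation $\sigma_k(D^2 w_\infty)=0$, i.e.\ the $k$-capacitary problem in the convex ring. But the homogeneous problem in a convex ring is precisely the setting where the positive results (Lewis, Colesanti--Salani, Cuoghi--Salani, Bianchini--Longinetti--Salani) establish or strongly suggest convexity of level sets; the inhomogeneous right-hand side is the engine of the counterexample, and your limit erases it. Consequently the decisive step --- exhibiting a smooth convex $\Omega_2$ for which $w_\infty$ has a non-convex level set --- is not only unproved but likely impossible. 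Your move (b) does not repair this: a barrier built from the saddle model $u^*$ gives only a one-sided comparison, whereas non-quasiconvexity requires two-sided control at three collinear points (a midpoint where the solution is strictly larger than at both endpoints). You acknowledge that the matching of $\partial\Omega_2$ to $u^*$ is the ``principal obstacle,'' and that is exactly the step that is missing. Also note that the degenerate limit equation loses uniform ellipticity, so the interior $C^{2,\alpha}$ compactness you invoke for $w_M$ is itself not available off the shelf.

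The paper's mechanism is different and worth contrasting, because it also explains where $n\geq 2k$ really enters. One fixes $M>M_1$ and takes $\Omega_2=B_\e(x_0)$ shrinking to a point $x_0$ chosen so that $\psi(x_0)>\min_{\ol\Omega_1}\psi=\psi(y)$, where $\psi$ solves $\sigma_k(D^2\psi)=1$ in the solid domain $\Omega_1$ with zero boundary data. Uniform estimates away from the hole, plus a Hessian-measure argument in the spirit of Trudinger--Wang, show that $\tilde u^\e\to\psi$ as $\e\to0$: the singular part of the measure near $\partial B_\e$ is of order $\e^{n-2k}$ (with a logarithm when $n=2k$), and this is the only place $n\geq 2k$ is used --- a point has zero $k$-capacity, so the deep well of depth $-M$ at $x_0$ disappears in the limit. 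If every sublevel set of $\tilde u^{\e_n}$ were convex, the segment from $x_n\in B_{\e_n}(x_0)$ (where $\tilde u^{\e_n}=-M<\psi(y)$) to $y$ would lie in $\{\tilde u^{\e_n}\le\psi(y)+\eta\}$, and passing to the limit forces $\psi(x_0)\le\psi(y)$, a contradiction. Your reading of the hypothesis $n\ge 2k$ as an algebraic condition for a saddle Hessian to lie in $\Gamma_k$ is a correct computation, but it is not the mechanism: the condition is capacitary, not cone-theoretic. To salvage your approach you would have to either prove non-convexity of level sets for the degenerate capacitary problem (doubtful) or switch to the shrinking-hole regime, at which point you need the uniform boundary estimates on $\partial B_\e$ and the measure-convergence argument that constitute the technical body of the paper.
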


\begin{remark}
 Our technique can be extended to the case when the right hand side of \eqref{k-hessian} is not a constant. Here, we use the constant function to keep the proof clean.  At this moment, we do not know if Theorem \ref{thm1} is true for $2k>n$. We think it may be an interesting problem to investigate.
\end{remark}

The problem \eqref{k-hessian} can be rewritten to \eqref{eq0.0}. In fact, suppose $u$ is the solution of \eqref{k-hessian}, let $u_1=u/M$ then $u_1$ satisfies \eqref{eq0.0} with
$C=1/(M)^k$. We can also rescale the domain $\Omega$ to keep the right hand side of the equation to be 1. In particular, we let
$$u_2(y)=\frac{u(\sqrt{M}y)}{M}.$$ Then $u_2$ satisfies \eqref{eq0.0} with $C=1$ while the domain becomes $\Omega/\sqrt{M}.$

We follow the frame work of \cite{HNS16} to construct counterexamples for $k$-Hessian equations. Our biggest obstacle is that the pure interior estimates are not true for $k$-Hessian equations when $k\geq 3$ (see \cite{Pog}). Therefore, we adapt the ideas of Hessian measure (see \cite{TW99, TW02}) to overcome this difficulty, which is the novelty of this paper.

\section{solvability of the Dirichlet problem }
\label{dirichlet problem}
In this section, we will consider the solvability of the following Dirichlet problem
\be\label{dp.1}
\left\{
\begin{aligned}
\sigma_k(D^2 u)&=1\,\,&\mbox{in $\Omega^\e:=\Omega_1\setminus\ol B_\e(0),$}\\
u&=0\,\,&\mbox{on $\partial\Omega_1,$}\\
u&=-M\,\,&\mbox{on $\partial B_\e(0).$}
\end{aligned}
\right.
\ee
Here $B_\e(0)\subset\Omega_1$ is a ball centered at the origin with radius $\e$ and $M\geq M_1(\Omega_1)>0$ is a positive real number. In this paper, we will always assume $\e\in(0, \e_0)$ to be a small constant.

\subsection{Choice of $M_1$ and $C^0$ estimates}
\label{dpsub1}
In this subsection, we will discuss the choice of $M_1$ as well as the $C^0$ estimates of the solution to \eqref{dp.1}, which we denote by $u^\e.$

First, let us consider the following Dirichlet problem
\be\label{dp.2}
\left\{
\begin{aligned}
\sigma_k(D^2 u)&=1\,\,&\mbox{in $\Omega_1$,}\\
u&=0\,\,&\mbox{on $\partial\Omega_1.$}
\end{aligned}
\right.
\ee
By Theorem 1 in \cite{CNS3}, we know that there exists a unique strictly $k$-convex solution $\psi\in C^\infty(\ol\Omega_1)$ to \eqref{dp.2}. In view of the standard maximum principle,
we have $\psi<0$ in $\Omega_1.$ We will choose $M_1>-\min\limits_{\ol\Omega_1}\psi$ such that $\psi$ is a supersolution of \eqref{dp.1}.

Second, it is easy to verify that
\be\label{lower barrier}
\ul u^\e_{M}=\frac{|x|^2}{2C_n^k}-M-\frac{\e^2}{2C_n^k}\ee
is a rotationally symmetric solution to the equation $\sigma_k(D^2 u)=1.$
Moreover, $\ul u^\e_{M}$ also satisfies the inside boundary condition of \eqref{dp.1}, that is, $\ul u^\e_M=-M$ on $\partial B_\e(0).$
We will choose $M_1=M_1(\Omega_1)>0$ such that $\ul u^\e_M$ is a subsolution of \eqref{dp.1}. In other words, we will choose $M_1>0$ large enough such that
$$\ul u^\e_{M_1}=\frac{|x|^2}{2C_n^k}-M_1-\frac{\e^2}{2C_n^k}\leq0\,\,\mbox{on $\partial\Omega_1.$}$$

Finally, we conclude that in this paper, $M_1>0$ is a positive constant only depending on $\Omega_1.$ In particular, $M_1$ is chosen such that $\psi$ and $\ul u^\e_{M}$ are the supersolution and subsolution to \eqref{dp.1} respectively. In the rest of this paper, for our convenience we will write $\ul u^\e$ instead of $\ul u^\e_M.$

Combining the above discussions with maximum principle we obtain the following $C^0$ estimate for the solution of \eqref{dp.1} directly.

\begin{lemma}
\label{dp-c0-lem}
Let $\ue$ be the solution of \eqref{dp.1}, then $\ue$ satisfies
\[\ul u^\e<\ue<\psi,\,\,\mbox{in $\Omega^\e.$}\]
\end{lemma}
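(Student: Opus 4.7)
The statement is a two-sided $C^0$ bound sandwiching the solution $u^\epsilon$ between the previously constructed sub- and supersolutions. The plan is to invoke the comparison principle for the $k$-Hessian operator on admissible functions twice, once for the upper bound $u^\epsilon < \psi$ and once for the lower bound $\underline{u}^\epsilon < u^\epsilon$. Both $\psi$ and $u^\epsilon$ are strictly $k$-convex by the cited existence theory in \cite{CNS3}, and $\underline{u}^\epsilon$ is manifestly strictly convex (its Hessian is a positive multiple of the identity), so they are all admissible, and the linearized operator $F_k^{ij} = \partial \sigma_k / \partial u_{ij}$ is elliptic along any path between them in $\Gamma_k$. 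That places us squarely in the regime where the standard comparison/strong maximum principle for $\sigma_k$ applies.

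For the upper bound, I would set $w = u^\epsilon - \psi$ and compare on $\partial \Omega^\epsilon$. On the outer boundary $\partial \Omega_1$, both functions vanish, so $w = 0$. On the inner sphere $\partial B_\epsilon(0)$, $u^\epsilon = -M$ while by the choice $M_1 > -\min_{\overline \Omega_1} \psi$ we have $\psi > -M_1 \geq -M$ (since $M \geq M_1$), so $w < 0$. Because $\sigma_k(D^2 u^\epsilon) = \sigma_k(D^2 \psi) = 1$ in $\Omega^\epsilon$, the comparison principle forces $w \leq 0$ throughout $\Omega^\epsilon$, and the strong maximum principle upgrades this to the strict inequality $u^\epsilon < \psi$ in $\Omega^\epsilon$.

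For the lower bound, I would perform the symmetric comparison with $v = \underline{u}^\epsilon - u^\epsilon$. On the inner sphere, $\underline{u}^\epsilon = -M = u^\epsilon$, so $v = 0$. On the outer boundary, by the choice of $M_1$ we have $\underline{u}^\epsilon \leq 0 = u^\epsilon$ on $\partial \Omega_1$, so $v \leq 0$. Since $\sigma_k(D^2 \underline{u}^\epsilon) = 1 = \sigma_k(D^2 u^\epsilon)$ in $\Omega^\epsilon$, the comparison principle yields $v \leq 0$ in $\Omega^\epsilon$, and again the strong maximum principle promotes this to $\underline{u}^\epsilon < u^\epsilon$ in $\Omega^\epsilon$.

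The only thing one has to be careful about is the justification that the comparison principle can indeed be applied: one needs admissibility of the competitor functions (which we have) and smoothness up to the boundary (which follows from the classical solvability of \eqref{dp.1} and \eqref{dp.2}). Neither of these points is really an obstacle here—this lemma is essentially a bookkeeping consequence of the construction of $\psi$ and $\underline{u}^\epsilon$ in Subsection \ref{dpsub1}, and I expect the whole argument to run in a few lines.
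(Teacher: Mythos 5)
Your argument is correct and is exactly the paper's approach: the paper simply asserts that the lemma follows ``directly'' from the sub/supersolution constructions of Subsection \ref{dpsub1} together with the maximum principle, and your write-up fills in precisely that two-sided comparison (boundary ordering plus ellipticity of the linearized operator along the segment of admissible Hessians). No gaps.
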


\subsection{$C^1$ estimates of $\ue$}
\label{dpsub2}
In order to obtain the $C^1$ estimates of $\ue,$ we need to divide the discussion into three cases, i.e., $\frac{n}{k}>2,$
$\frac{n}{k}=2,$ and $\frac{n}{k}<2.$

Case 1. when $\frac{n}{k}>2,$ let
\[\phi=-C|x|^{2-n/k}+C\e^{2-n/k}-M\] for some
$C=C_0\e^{n/k-2}>0$ such that $\phi|_{\partial\Omega_1}\geq 0.$ Then we can check that $\phi$ is a supersolution of \eqref{dp.1} satisfying $\sigma_k(D^2\phi)=0$ and $\phi|_{\partial B_\e(0)}=-M.$ Moreover, it's easy to see that here $C_0$ is a positive constant only depending on $\Omega_1.$

Case 2.  when $\frac{n}{k}=2,$ let
\[\phi=C\log|x|-C\log\e-M\] for some
$C=\frac{C_0}{|\log\e|}>0$ such that $\phi|_{\partial\Omega_1}\geq 0.$ Then we can check that $\phi$ is a supersolution of \eqref{dp.1} satisfying $\sigma_k(D^2\phi)=0$ and $\phi|_{\partial B_\e(0)}=-M.$ Moreover, it's easy to see that here $C_0$ is a positive constant only depending on $\Omega_1.$

Case 3. when $\frac{n}{k}<2,$ let
\[\phi=C_0|x|^{2-n/k}-C_0\e^{2-n/k}-M\] for some
$C_0>0$ such that $\phi|_{\partial\Omega_1}\geq 0.$ Then we can check that $\phi$ is a supersolution of \eqref{dp.1} satisfying $\sigma_k(D^2\phi)=0$ and $\phi|_{\partial B_\e(0)}=-M.$ Moreover, it's easy to see that here $C_0$ is a positive constant only depending on $\Omega_1.$

In the following, when there is no confusion, we will not differentiate the three cases and just use $\phi$ to denote the supersolution of \eqref{dp.1} that
satisfies $$\sigma_k(D^2\phi)=0 \,\,\text{and}\,\, \phi|_{\partial B_\e(0)}=-M.$$
Combining with the subsolution $\ul u^\e$ constructed in Subsection \ref{dpsub1} and the maximum principle we get
\begin{lemma}{\textbf{[$C^1$- bounds on $\partial B_\e(0)$]}}
\label{dp-c1inside-lem}
Let $\ue$ be the solution of \eqref{dp.1}, then on $\partial B_\e(0),$ $\ue$ satisfies
\[\frac{\partial\ul u^\e}{\partial\nu}<\frac{\partial\ue}{\partial\nu}<\frac{\partial\phi}{\partial\nu}.\]
Here $\nu$ is the inward unit normal of $\partial\Omega^\e,$ i.e., $\nu$ points into $\Omega_1\setminus\ol B_\e(0).$
\end{lemma}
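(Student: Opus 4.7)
The plan is to first sandwich $\ue$ between $\ul u^\e$ and $\phi$ on all of $\Omega^\e$ by the $k$-Hessian comparison principle, and then to upgrade the ordering to a strict inequality of inward normal derivatives on $\partial B_\e(0)$ via the Hopf boundary point lemma.

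On $\partial\Omega^\e=\partial\Omega_1\cup\partial B_\e(0)$ the constructions above give $\ul u^\e\le 0=\ue\le\phi$ on $\partial\Omega_1$, while $\ul u^\e=\ue=\phi=-M$ on $\partial B_\e(0)$. Together with
\[
\sigma_k(D^2\ul u^\e)=\sigma_k(D^2\ue)=1\ge 0=\sigma_k(D^2\phi),
\]
the standard $k$-Hessian comparison principle yields $\ul u^\e\le\ue\le\phi$ throughout $\overline{\Omega^\e}$. Consequently $w_1:=\ue-\ul u^\e$ and $w_2:=\phi-\ue$ are both nonnegative in $\Omega^\e$ and vanish identically on $\partial B_\e(0)$.

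Next I would linearize the $k$-Hessian operator along straight-line segments of Hessians, obtaining
\[
a_1^{ij}(x)(w_1)_{ij}=0,\qquad a_2^{ij}(x)(w_2)_{ij}=-1,
\]
where $a_r^{ij}(x)=\int_0^1\sigma_k^{ij}\bigl(tD^2A_r+(1-t)D^2B_r\bigr)\,dt$ with $(A_1,B_1)=(\ue,\ul u^\e)$ and $(A_2,B_2)=(\phi,\ue)$. These linear operators are uniformly elliptic on compact subsets of $\Omega^\e$ because along the integration segment the Hessians stay in $\Gamma_k$. The strict positivity $w_1>0$ in $\Omega^\e$ already follows from Lemma \ref{dp-c0-lem}, and the strong maximum principle applied to the second equation gives $w_2>0$ in $\Omega^\e$ as well. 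Since $\partial B_\e(0)$ trivially satisfies the interior sphere condition, the Hopf boundary point lemma produces $\partial w_r/\partial\nu>0$ at every point of $\partial B_\e(0)$, which is precisely the stated chain of strict inequalities.

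The hard part will be ensuring that the linearization is genuinely elliptic: the convex combinations $tD^2\ue+(1-t)D^2\phi$ need to lie in $\Gamma_k$, but $D^2\phi$ sits on $\partial\Gamma_k$ since $\sigma_k(D^2\phi)=0$. Concavity of $\sigma_k^{1/k}$ on $\Gamma_k$ together with a brief approximation (replacing $\phi$ by $\phi+\delta|x|^2/(2C_n^k)$ and sending $\delta\to 0^+$, which is a genuine supersolution of \eqref{dp.1} with the same boundary behavior) handles this routinely. The analogous worry for $\ul u^\e$ is vacuous, since $D^2\ul u^\e$ is a positive scalar multiple of the identity and every convex combination with a $k$-admissible Hessian therefore lies in $\Gamma_k$.
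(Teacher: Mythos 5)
Your proposal is correct and follows essentially the same route as the paper, which simply invokes the barriers $\ul u^\e$ and $\phi$ (both equal to $-M$ on $\partial B_\e(0)$) together with the maximum principle; you merely spell out the comparison step and make the strictness explicit via Hopf's lemma. The extra care you take with the ellipticity of the linearized operator along the segment to $D^2\phi\in\partial\Gamma_k$ is a legitimate detail the paper glosses over, and your approximation fix handles it.
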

\begin{remark}
\label{dprmk1}
We should keep in mind that on $\partial B_\e,$ when $\frac{n}{k}>2,$ $|D\ue|<C_0\lt(\frac{n}{k}-2\rt)\e^{-1};$
when $\frac{n}{k}=2,$ $|D\ue|<C_0\e^{-1}|\log\e|^{-1};$ and when $\frac{n}{k}<2,$ $|D\ue|<C_0\lt(2-\frac{n}{k}\rt)\e^{1-n/k}.$
Here $C_0>0$ is a positive constant only depending on $\Omega_1.$
\end{remark}

Next, we want to obtain the $C^1$ bound on $\partial\Omega_1.$ In Subsection \ref{dpsub1} we already constructed a supersolution $\psi$ of \eqref{dp.1}
that satisfies $\psi<0$ in $\Omega_1$ and $\psi=0$ on $\partial\Omega_1.$ It is easy to see that for all $0<\e<\e_0$, we have $\psi|_{\partial B_\e(0)}<-c_0.$ Here $c_0>0$ only depends on
$\e_0.$ Now, let $C=C(M, \e_0)>1$ be a large constant such that $c_0C\geq M.$ Then $C\psi$ is a subsolution of \eqref{dp.1} satisfying $C\psi=0$ on $\partial\Omega_1.$
Applying the maximum principle again we get
\begin{lemma}{\textbf{[$C^1$- bounds on $\partial\Omega_1$]}}
\label{dp-c1outside-lem}
Let $\ue$ be the solution of \eqref{dp.1}, then on $\partial\Omega_1,$ $\ue$ satisfies
\[C\frac{\partial\psi}{\partial\nu}<\frac{\partial\ue}{\partial\nu}<\frac{\partial\psi}{\partial\nu}.\]
Here $\nu$ is the inward unit normal of $\partial\Omega^\e,$ i.e., $\nu$ points into $\Omega_1\setminus\ol B_\e(0).$
\end{lemma}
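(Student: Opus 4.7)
The plan is to sandwich $\ue$ between two admissible functions that both vanish on $\partial\Omega_1$, and then compare inward normal derivatives at that common boundary. The supersolution is the function $\psi$ constructed in Subsection~\ref{dpsub1}, and the subsolution is its scalar multiple $C\psi$ with $C=C(M,\e_0)>1$ as chosen just before the statement.

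For the upper bound $\partial_\nu\ue<\partial_\nu\psi$, Lemma~\ref{dp-c0-lem} already yields $\ue<\psi$ in $\Omega^\e$, while $\ue=\psi=0$ on $\partial\Omega_1$. Setting $w=\psi-\ue$, I have $w>0$ in $\Omega^\e$ and $w=0$ on $\partial\Omega_1$. Because $\sigma_k^{1/k}$ is concave on $\Gamma_k$, the identity $\sigma_k^{1/k}(D^2\psi)-\sigma_k^{1/k}(D^2\ue)=0$ can be rewritten as $Lw=0$ for a linear operator $L=a^{ij}(x)\partial_{ij}$ whose coefficients are obtained by integrating the linearization of $\sigma_k^{1/k}$ along the segment joining $D^2\ue$ and $D^2\psi$. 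Admissibility of the endpoints makes $L$ strictly elliptic near $\partial\Omega_1$, so Hopf's boundary point lemma upgrades $\partial_\nu w\ge0$ to $\partial_\nu w>0$, i.e.\ $\partial_\nu\ue<\partial_\nu\psi$.

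For the lower bound I first verify that $C\psi$ is an admissible subsolution of \eqref{dp.1}. Since $\psi$ is $k$-admissible and $C>1$, homogeneity gives
\[
\sigma_k(D^2(C\psi))=C^k\,\sigma_k(D^2\psi)=C^k\ge 1.
\]
On $\partial\Omega_1$ we have $C\psi=0=\ue$, while on $\partial B_\e(0)$ the inequality $\psi\le-c_0$ together with $c_0C\ge M$ forces $C\psi\le-M=\ue$. The comparison principle for admissible solutions then yields $C\psi\le\ue$ throughout $\Omega^\e$, with equality on $\partial\Omega_1$.

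Applying the same Hopf-type argument to $w'=\ue-C\psi$ (writing $\sigma_k^{1/k}(D^2\ue)-\sigma_k^{1/k}(D^2(C\psi))\ge 0$ as $Lw'\ge 0$ for a linear elliptic $L$) gives $\partial_\nu w'>0$, i.e.\ $\partial_\nu\ue>C\partial_\nu\psi$. Combined with the previous step, this produces the two-sided bound $C\partial_\nu\psi<\partial_\nu\ue<\partial_\nu\psi$ on $\partial\Omega_1$. The main technical point in both halves is the justification of strict inequality via Hopf's lemma, which relies on the strict $k$-convexity of $\ue$ and $\psi$ (hence ellipticity of the linearization) in a one-sided neighborhood of $\partial\Omega_1$; the remainder is a routine verification of the sub/supersolution inequalities.
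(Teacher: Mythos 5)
Your proposal is correct and follows the same route as the paper: sandwich $\ue$ between the supersolution $\psi$ and the subsolution $C\psi$, both vanishing on $\partial\Omega_1$, and compare inward normal derivatives there (the paper simply invokes ``the maximum principle'' where you spell out the Hopf boundary point lemma applied to the linearized operator). The only difference is that you supply more detail than the paper does; the substance is identical.
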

Finally, we will give a $C^1$ bound for $\ue$ in $\Omega^\e.$
\begin{lemma}{\textbf{[$C^1$- bound in $\Omega^\e$]}}
\label{dp-c1global-lem}
Let $\ue$ be the solution of \eqref{dp.1}, then
\[\max\limits_{\Omega^\e}|D\ue|=\max\limits_{\partial\Omega^\e}|D\ue|.\]
\end{lemma}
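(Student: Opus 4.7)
I would prove this global gradient bound by the classical maximum-principle argument applied to the auxiliary function $w := |D\ue|^2$. The point is that on the strictly $k$-convex solution $\ue$ the linearized operator $L := \sigma_k^{ij}(D^2\ue)\,\partial_i\partial_j$ is a linear uniformly elliptic operator with no zero-th order term, so it suffices to show $Lw \geq 0$ in $\Omega^\e$ and then invoke the weak maximum principle.

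To verify $Lw \geq 0$, first differentiate the equation $\sigma_k(D^2\ue) = 1$ once in each coordinate direction $x_l$, which kills the right-hand side and yields the identity $\sigma_k^{ij}\ue_{ijl} = 0$ for every $l$. Computing $w_{ij}$ by the chain and product rules produces exactly two contributions: a ``third-derivative'' piece $2\sum_l \ue_l \ue_{lij}$ that vanishes after contracting with $\sigma_k^{ij}$ because of the differentiated equation, and a ``Hessian-squared'' piece of the form $2\sum_l \sigma_k^{ij}\ue_{li}\ue_{lj}$, which is non-negative since $\sigma_k^{ij}(D^2\ue)$ is positive definite (applied to each vector $(\ue_{l1},\ldots,\ue_{ln})$). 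Hence $Lw \geq 0$, and the weak maximum principle gives $\max_{\ol{\Omega^\e}} w = \max_{\partial\Omega^\e} w$, which is the claim.

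The only point that requires justification is the regularity needed to differentiate the equation twice more in the classical sense. This is guaranteed because $\partial\Omega^\e$ is smooth and the boundary values are constant on each component, so the standard solvability and regularity theory cited for \eqref{dp.2} (namely \cite{CNS3}) gives $\ue \in C^\infty(\ol{\Omega^\e})$. The interior $C^2$-estimate failures for $k$-Hessian equations ($k \geq 3$) that are mentioned in the introduction do not enter here, since those reflect the absence of prescribed boundary data; in the current Dirichlet setting on a smooth convex ring they are irrelevant. So no substantive obstacle appears in the argument.
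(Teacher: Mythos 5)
Your argument is exactly the paper's proof: set $V=|D\ue|^2$, compute $\s_k^{ij}V_{ij}=2\s_k^{ij}\ue_l\ue_{lij}+2\s_k^{ij}\ue_{li}\ue_{lj}=2\s_k^{ij}\ue_{li}\ue_{lj}\geq 0$ using the differentiated equation and the positivity of $\s_k^{ij}$, then apply the maximum principle. The proposal is correct and matches the paper's approach.
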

\begin{proof}
Let $V=|D\ue|^2,$ a direct calculation yields
\[
\begin{aligned}
\s_k^{ij}V_{ij}&=\s_k^{ij}(2\ue_l\ue_{lij}+2\ue_{li}\ue_{lj})\\
&=2\s_k^{ij}\ue_{li}\ue_{lj}>0,
\end{aligned}
\]
where $\s_k^{ij}=\frac{\partial\s_k}{\partial u_{ij}}.$ Lemma \ref{dp-c1global-lem} then follows from the maximum principle.
\end{proof}

We also need the following interior gradient estimates, which is proved in Theorem 3.1 of \cite{Tru97}.
\begin{lemma}{\textbf{[$C^1$- interior estimates in $\Omega^\e$]}}
\label{dp-c1interior-lem}
Let $\ue$ be the solution of \eqref{dp.1}, then for any ball $B=B_r(y)\subset\Omega^\e$ we have the estimate
\[|D\ue(y)|\leq\frac{C}{r}\text{osc}_B\ue,\]
where $C$ is a constant depending on $k$ and $n.$
\end{lemma}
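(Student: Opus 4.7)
The plan is to prove this via a pointwise maximum principle argument applied to an auxiliary function built from the gradient, a function of $\ue$, and a cutoff supported in $B$; no boundary data of $\Omega^\e$ enters, and the argument rests entirely on the equation $\s_k(D^2\ue)=1$ together with $k$-convexity of $\ue$. After normalizing $v = \ue-\inf_B\ue\in[0,\,\text{osc}_B\ue]$, I would fix a cutoff $\eta\in C_c^\infty(B)$ with $\eta(y)=1$, $|D\eta|\le C/r$, $|D^2\eta|\le C/r^2$, and consider
\[ w \;=\; \eta^\beta\,|Dv|^2\,\Phi(v), \qquad \Phi(v)=(A-v)^{-\alpha}, \]
with $A=2\,\text{osc}_B\ue$ and $\alpha,\beta>0$ large parameters depending only on $n$ and $k$.

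Since $w\equiv 0$ on $\partial B$, it suffices to bound $w$ at an interior maximum point $x_0\in B$, where $D\log w(x_0)=0$ and $\s_k^{ij}(D^2\ue)(\log w)_{ij}(x_0)\le 0$. After rotating so that $D^2\ue(x_0)$ is diagonal with eigenvalues $\lambda\in\Ga_k$, the third-order terms arising from $\s_k^{ij}(|Dv|^2)_{ij}$ split into a piece $2\s_k^{ij}v_\ell \ue_{ij\ell}$, which vanishes after differentiating $\s_k(D^2\ue)=1$ in the $e_\ell$ direction, and a nonnegative piece $2\s_k^{ij}v_{\ell i}v_{\ell j}$. The cross terms produced by $\eta^\beta$ and $\Phi(v)$ are absorbed using $D\log w(x_0)=0$ and the homogeneity identity $\s_k^{ij}\ue_{ij}=k$. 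Choosing $\alpha$ large, the good quartic contribution $\alpha(\alpha+1)(A-v)^{-2}\s_k^{ij}v_iv_j$ coming from $\log\Phi(v)$ dominates the rest, forcing $|Dv(x_0)|^2\le C(n,k)r^{-2}(\text{osc}_B\ue)^2$ and hence the claimed bound at $y$.

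The main obstacle is making this dominance step quantitative with a constant depending only on $n,k$, not on $\e$ or on how degenerate $D^2\ue$ may become. It requires the Newton--MacLaurin-type consequence $\sum_i\s_k^{ii}(\lambda)\ge c(n,k)>0$ whenever $\lambda\in\Ga_k$ satisfies $\s_k(\lambda)=1$, together with a lower bound comparing $\s_k^{ij}v_iv_j$ to $|Dv|^2\sum_i\s_k^{ii}$ that remains effective even when $Dv$ aligns with an eigendirection where some $\s_k^{ii}$ happens to be small. Balancing this against the bad $\eta$-cross terms by exploiting $D\log w(x_0)=0$ carefully is the technical core of Trudinger's argument in \cite{Tru97}; once it is in place, unwrapping $w(y)\le w(x_0)$ yields the interior estimate with $C$ depending only on $n$ and $k$.
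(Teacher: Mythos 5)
The paper does not actually prove this lemma: it is quoted directly from Theorem 3.1 of Trudinger's \emph{Weak solutions of Hessian equations} \cite{Tru97}, and your sketch --- the test function $\eta^\beta|Dv|^2\Phi(v)$, the maximum-principle computation at an interior critical point, the cancellation of $\s_k^{ij}v_\ell \ue_{ij\ell}$ via the differentiated equation, and the reduction to a quantitative lower bound for $\s_k^{ij}v_iv_j$ --- is an outline of exactly the argument the paper cites, with the hard step honestly deferred to the same reference. The one point worth tightening is that you cannot simultaneously diagonalize $D^2\ue(x_0)$ and align $Dv(x_0)$ with a coordinate axis; the standard resolution is to work in the gradient frame $Dv(x_0)=|Dv(x_0)|e_1$, where the criticality condition $D\log w(x_0)=0$ forces $v_{11}(x_0)<0$ once $|Dv(x_0)|$ is large relative to $\mathrm{osc}_B\ue/r$, and for $\lambda\in\Ga_k$ a nonpositive entry in the relevant direction yields $\s_k^{11}\geq c(n,k)\sum_i\s_k^{ii}$ --- which is precisely the ``dominance'' you correctly identify as the technical core of \cite{Tru97}.
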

Now, denote $U_\delta:=\{x\in\Omega^\e: \text{dist}(x, \partial\Omega_1)<\delta\}$, then by Lemma \ref{dp-c0-lem}, Lemma \ref{dp-c1outside-lem},
Lemma \ref{dp-c1global-lem}, and Lemma \ref{dp-c1interior-lem} we conclude
\begin{lemma}{\textbf{[$C^1$-estimates near $\partial\Omega_1$]}}
\label{dp-c1Udelta-lem}
Let $\ue$ be the solution of \eqref{dp.1}, then in $U_\delta$ we have the estimate
\[|D\ue|\leq C,\]
where $C=C(\delta)$ is a constant depending on $\delta$ but independent of $\e.$
\end{lemma}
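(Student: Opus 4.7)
The plan is to combine the interior gradient estimate (Lemma \ref{dp-c1interior-lem}) with the maximum-principle argument of Lemma \ref{dp-c1global-lem}, applied not on all of $\Omega^\e$ but on a subdomain that stays uniformly away from the shrinking hole. This separation is essential because, as Remark \ref{dprmk1} records, $|D\ue|$ on $\partial B_\e(0)$ blows up as $\e\to 0$, so a uniform bound cannot be expected near the origin. On the other hand, provided $\delta$ is small enough (depending on $\Omega_1$) and $\e_0$ is small, the layer $U_\delta$ is automatically separated from $B_\e(0)$, and this is the only regime relevant to the lemma.

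Concretely, I would fix an auxiliary radius $r_0$ depending only on $\delta$ and $\Omega_1$ such that $\e_0 < r_0$ and $\mathrm{dist}(\partial B_{r_0}(0),\partial\Omega_1)>\delta$. Setting $\Omega':=\Omega_1\setminus\ol{B_{r_0}(0)}\subset\Omega^\e$, one then has $U_\delta\subset\Omega'$, and the exact computation from the proof of Lemma \ref{dp-c1global-lem} shows that $V:=|D\ue|^2$ satisfies $\s_k^{ij}V_{ij}>0$ in $\Omega'$, with $(\s_k^{ij})$ positive definite because $\ue$ is $k$-admissible. The weak maximum principle then yields $\max_{\ol{\Omega'}}V=\max_{\partial\Omega'}V$, where $\partial\Omega'=\partial\Omega_1\cup\partial B_{r_0}(0)$, so it remains only to bound $|D\ue|$ on each of these two pieces uniformly in $\e$.

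The bound on $\partial\Omega_1$ is immediate: since $\ue\equiv 0$ there, the tangential gradient vanishes and $|D\ue|=|\partial_\nu\ue|$, which by Lemma \ref{dp-c1outside-lem} is controlled by $|\partial_\nu\psi|$ up to a factor depending only on $M$ and $\Omega_1$. For a point $y\in\partial B_{r_0}(0)$, one observes that the ball $B_\rho(y)\subset\Omega^\e$ with radius $\rho:=\tfrac12\min\bigl(r_0-\e_0,\,\mathrm{dist}(\partial B_{r_0}(0),\partial\Omega_1)\bigr)$ is admissible, and its radius is bounded below independently of $\e$. Applying Lemma \ref{dp-c1interior-lem} together with the $C^0$ bound $-M\leq\ue\leq 0$ from Lemma \ref{dp-c0-lem} then gives $|D\ue(y)|\leq (C/\rho)\cdot 2M$, uniformly in $\e$.

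The main obstacle, and the reason for interposing the sphere $\partial B_{r_0}(0)$ rather than invoking Lemma \ref{dp-c1global-lem} directly on $\Omega^\e$, is precisely to secure an $\e$-independent inner radius $\rho$ for the interior estimate; the known blow-up of $|D\ue|$ on $\partial B_\e(0)$ is thereby confined to the discarded annulus $B_{r_0}(0)\setminus\ol{B_\e(0)}$, which no longer enters the argument. Once this separation is in place, the rest is a routine application of the maximum principle and of the already-established boundary estimates.
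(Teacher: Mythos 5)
Your proof is correct and follows essentially the same route as the paper, which gives no detailed argument but simply cites Lemmas \ref{dp-c0-lem}, \ref{dp-c1outside-lem}, \ref{dp-c1global-lem}, and \ref{dp-c1interior-lem} -- exactly the four ingredients you combine. Your interposition of the sphere $\partial B_{r_0}(0)$, so that the maximum principle of Lemma \ref{dp-c1global-lem} is run on $\Omega_1\setminus\ol{B_{r_0}(0)}$ with both boundary pieces controlled uniformly in $\e$, is the natural (and necessary) way to make the paper's one-line citation into a proof.
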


\subsection{$C^2$ estimates of $\ue$}
\label{dpsub3}
Let $p\in\partial B_\e$ be an arbitrary point on $\partial B_\e.$ Without loss of generality, we may choose local coordinates
$\{\tilde x_1, \cdots, \tilde x_n\}$ in a neighborhood of $p$ such that $p$ is the origin. Let $\tilde x_n$ axis be the inward normal of
$\partial B_\e$ (pointing into $\Omega^\e$) then following the argument in \cite{CNS3} (see page 271, equation (1.8)) we obtain
\be
\label{dp3.1}
\ue_{\alpha\beta}(p)=\ue_\nu\e^{-1}\delta_{\alpha\beta}\,\,\mbox{for $p\in\partial B_\e$,}
\ee
where $u^\e_\nu:=\frac{\partial u^\e}{\partial\tilde x_n}.$
In view of Remark \ref{dprmk1} we have
\begin{remark}
\label{dprmk2}
On $\partial B_\e$ when $\frac{n}{k}>2,$ $|\ue_{\alpha\beta}|<C_0\lt(\frac{n}{k}-2\rt)\e^{-2};$
when $\frac{n}{k}=2,$ $|\ue_{\alpha\beta}|<C_0\e^{-2}|\log\e|^{-1};$ and when $\frac{n}{k}<2,$ $|\ue_{\alpha\beta}|<C_0\lt(2-\frac{n}{k}\rt)\e^{-n/k}.$
Here $C_0>0$ is a positive constant only depending on $\Omega_1.$
\end{remark}
In the following, we will establish the $C^2$-boundary estimates in the tangential normal directions and in the double normal directions.

We start with estimating the $C^2$ estimates in the tangential normal directions on $\partial B_\e(0).$
We denote the angular derivative $x_k\frac{\partial}{\partial x_l}-x_l\frac{\partial}{\partial x_k}$
by $\partial_{k, l}.$ For our convenience, we let $\partial:= x_\alpha\frac{\partial}{\partial x_n}-x_n\frac{\partial}{\partial x_\alpha}.$
Here, we assume $x_n$ to be the radial direction and $1\leq\alpha\leq n-1$ is a fixed integer.

\begin{lemma}
\label{dpsub3-lem1}
Let $\ue$ be the solution of \eqref{dp.1}, then we have
\[|\partial \ue|\leq C\,\,\mbox{in $\bar\Omega^\e,$}\]
where $C>0$ is a constant independent of $\e.$
\end{lemma}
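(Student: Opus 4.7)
The plan is to exploit the rotational invariance of $\sigma_k$ together with the spherical symmetry of the inner boundary $\partial B_\epsilon(0)$. The key observation is that the vector field $\partial = x_\alpha \partial_n - x_n \partial_\alpha$ generates rotations in the $(x_\alpha, x_n)$-plane; such rotations leave both the operator $\sigma_k(D^2\cdot)$ and the sphere $\partial B_\epsilon(0)$ invariant, and only act on the outer boundary condition (which is the only source of $C^1$-data independent of $\epsilon$ that we have).

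First, I would differentiate \eqref{dp.1} along $\partial$. Setting $v := \partial \ue = x_\alpha \ue_n - x_n \ue_\alpha$, a direct computation gives
\[
\s_k^{ij} v_{ij} = 2\bigl(\s_k^{\alpha j} \ue_{jn} - \s_k^{nj} \ue_{j\alpha}\bigr) + x_\alpha \s_k^{ij} \ue_{nij} - x_n \s_k^{ij} \ue_{\alpha ij}.
\]
The last two terms vanish because differentiating the equation $\s_k(D^2\ue)=1$ in any coordinate direction gives $\s_k^{ij}\ue_{ijl}=0$. The first bracket vanishes because $\s_k'(A)$ is a polynomial in $A$ and hence commutes with $A$, so the matrix $\s_k^{ij}(D^2\ue) \, \ue_{jl}$ is symmetric in $(i,l)$. (This can also be seen invariantly from the fact that $\ue\circ R_\theta$ satisfies \eqref{dp.1} for every rotation $R_\theta$ in the $(x_\alpha,x_n)$-plane; differentiating at $\theta=0$ yields $\s_k^{ij} v_{ij}=0$.)

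Since $\ue$ is $k$-admissible, the matrix $[\s_k^{ij}(D^2\ue)]$ is positive definite, so the operator $L = \s_k^{ij}\partial_{ij}$ is elliptic with no zero-order term. By the weak maximum principle, $\max_{\bar\Omega^\e} v$ and $\min_{\bar\Omega^\e} v$ are attained on $\partial\Omega^\e = \partial\Omega_1 \cup \partial B_\e(0)$.

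It remains to control $v$ on $\partial\Omega^\e$. On $\partial B_\e(0)$, the sphere is invariant under rotations in the $(x_\alpha,x_n)$-plane, so $\partial$ is tangential there; combined with $\ue \equiv -M$ on $\partial B_\e(0)$, this forces $v \equiv 0$ on $\partial B_\e(0)$. On $\partial\Omega_1$, we have $\ue\equiv 0$, so $D\ue$ is normal to $\partial\Omega_1$, and Lemma \ref{dp-c1outside-lem} gives $|D\ue|\le C$ on $\partial\Omega_1$ with $C$ depending only on $\Omega_1$. Since $|x_\alpha|,|x_n|\le \diam(\Omega_1)$, this yields $|v|\le C\diam(\Omega_1)$ on $\partial\Omega_1$. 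The main technical point is the commutator identity $\s_k^{ij} v_{ij}=0$; after that the argument is routine maximum principle. Combining the two boundary bounds completes the proof.
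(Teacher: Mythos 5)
Your proposal is correct and follows essentially the same route as the paper: the identity $\s_k^{ij}(\partial\ue)_{ij}=\partial\s_k(D^2\ue)=0$ (which the paper cites from \cite{CNS1} and you derive via the commutation/rotation-invariance argument), the vanishing of $\partial\ue$ on $\partial B_\e(0)$ by tangency, the bound on $\partial\Omega_1$ from Lemma \ref{dp-c1outside-lem}, and the maximum principle for the elliptic operator $\s_k^{ij}\partial_{ij}$.
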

\begin{proof}
It is clear that on $\partial B_\e$ we have $\partial \ue=0.$ Moreover, by the virtue of
Lemma \ref{dp-c1outside-lem} we get on $\partial\Omega_1,$ $|\partial \ue|\leq C$ for some $C>0$ only depending on $\Omega_1.$
By \cite{CNS1} we know that
\[\s_k^{ij}(\partial \ue)_{ij}=\partial\s_k(D^2\ue)=0.\]
The Lemma follows from the maximum principle.
\end{proof}

In view of Subsection \ref{dpsub1}, we know that by our choice of $M,$
$$\ul u^\e=\frac{|x|^2}{2C_n^k}-\frac{\e^2}{2C_n^k}-M$$
is a subsolution of \eqref{dp.1}. Let $h=\ue-\ul u^\e$ and $\mathcal L:=\sigma_k^{ij}\partial_{ij}.$
It's clear that $h=0$ on $\partial B_\e,$ $h>c_0>0$ on $\partial\Omega_1$ for some $c_0$ independent of $\e,$
and $\mathcal Lh\leq0$ in $\Omega^\e.$ Combining with the results of Lemma \ref{dpsub3-lem1} and the standard maximum principle, we conclude that there exists a positive constant
A such that $Ah>|\partial \ue|$ in $\Omega^\e.$ Here $A>0$ is a constant independent of $\e.$ Therefore, for any $p\in\partial B_\e,$ we can rotate
$\{x_1, \cdots, x_n\}$ such that $p=(0, \cdots, 0, \e).$ Since $Ah>|\partial u|$ in $\Omega^\e$ and $Ah=|\partial \ue|=0$ on $\partial B_\e$ we obtain
\[\pm(\partial \ue)_n<Ah_n=A\lt[\ue_n-\frac{2\e}{(C_n^k)^{1/k}}\rt]<A_1\ue_n.\]
Here, by our choice of the orientation, $x_n$ points into $\Omega^\e.$
We conclude
\begin{lemma}
\label{dp-c2mix-inside-lem}(\textbf{$C^2$ bound on $\partial B_\e$ in mixed directions})
Let $\ue$ be the solution of \eqref{dp.1}, then on $\partial B_\e,$ we have
\[|(\ue)_{\tau\nu}|\leq \frac{C}{\e}\ue_\nu\,\,\mbox{on $\partial{B_\e},$}\]
where $\tau$ is an arbitrary unit tangential vector of $\partial B_\e,$ $\nu$ is the inward unit normal of $\partial B_\e$ (pointing into $\Omega^\e$), and $C>0$ is a constant independent of $\e.$
\end{lemma}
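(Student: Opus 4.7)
The plan is to dominate the angular derivative $\partial \ue := x_\alpha \ue_n - x_n \ue_\alpha$ by a multiple of the barrier $h := \ue - \ul u^\e$ built from the subsolution of Subsection \ref{dpsub1}, and then extract $\ue_{\tau\nu}(p)$ at a point $p\in\partial B_\e$ by reading off interior normal derivatives. The proof assembles three ingredients: (i) verifying the structural identities $\mathcal L(\partial \ue)=0$ and $\mathcal L h\leq 0$, (ii) a boundary comparison that fixes a coupling constant $A$ independent of $\e$ so that $Ah\geq|\partial \ue|$ throughout $\Omega^\e$, and (iii) a Taylor expansion at $p$ that isolates the mixed second derivative.

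For (i), rotation invariance of $\sigma_k$ gives $\mathcal L(\partial \ue)=0$ upon differentiating the equation $\sigma_k(D^2\ue)=1$ along the rotational vector field $x_\alpha\partial_n-x_n\partial_\alpha$. The inequality $\mathcal L h\leq 0$ follows from concavity of $\sigma_k^{1/k}$ on $\Gamma_k$: since both $\ue$ and $\ul u^\e$ are admissible solutions of $\sigma_k^{1/k}=1$, linearizing this concave operator at $\ue$ yields $\sigma_k^{ij}(\ue)(\ul u^\e-\ue)_{ij}\geq 0$, which is exactly $\mathcal Lh\leq 0$.

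For (ii), Lemma \ref{dpsub3-lem1} supplies $|\partial \ue|\leq C$ on $\partial\Omega^\e$ with $C$ independent of $\e$, while on $\partial\Omega_1$ the barrier satisfies $h\geq c_0>0$ uniformly in $\e$ (because the choice $M>M_1$ forces $\ul u^\e|_{\partial\Omega_1}\leq -(M-M_1)<0$ while $\ue|_{\partial\Omega_1}=0$). Since $h=\partial \ue=0$ on $\partial B_\e$, taking $A:=C/c_0$ gives $Ah\pm\partial \ue\geq 0$ on $\partial\Omega^\e$. As $\mathcal L(Ah\pm\partial \ue)\leq 0$, the maximum principle propagates this to all of $\Omega^\e$, giving $Ah\geq|\partial \ue|$ there with $A$ independent of $\e$.

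For (iii), fix $p\in\partial B_\e$ and rotate coordinates so that $p=(0,\ldots,0,\e)$ and the inward normal is $\nu=e_n$. Since $\ue\equiv -M$ on $\partial B_\e$, the tangential derivatives $\ue_\alpha(p)$ vanish for $\alpha<n$, and differentiating $\partial \ue = x_\alpha \ue_n - x_n \ue_\alpha$ at $p$ gives
\[(\partial \ue)_n(p) = -\e\,\ue_{\alpha n}(p).\]
The functions $Ah\mp\partial \ue\geq 0$ vanish at $p$, so their derivatives along $\nu$ (pointing into $\Omega^\e$) are nonnegative, whence $Ah_n(p)\geq|(\partial \ue)_n(p)|=\e\,|\ue_{\alpha n}(p)|$. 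Since $\ul u^\e_n(p)>0$, one has $h_n(p)\leq\ue_n(p)=\ue_\nu(p)$, and rearranging yields $|\ue_{\tau\nu}(p)|\leq (A/\e)\,\ue_\nu(p)$ for $\tau=e_\alpha$; the bound for a general unit tangent follows by linearity with at most a dimensional loss absorbed into $C$. The main delicacy is keeping every constant ($A$, $c_0$, and the bound on $|\partial \ue|$) uniform in $\e$, which rests on the $\e$-independent estimates of Lemma \ref{dp-c1outside-lem} and Lemma \ref{dpsub3-lem1}.
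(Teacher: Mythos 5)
Your proof is correct and follows essentially the same route as the paper: dominate the rotational derivative $\partial\ue$ by $A h$ with $h=\ue-\ul u^\e$ via the maximum principle, then read off $\ue_{\tau\nu}(p)$ from the inward normal derivatives at a boundary point where both barriers vanish. You merely fill in details the paper leaves implicit (the concavity argument for $\mathcal Lh\le 0$ and the explicit choice $A=C/c_0$), so no further comment is needed.
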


In the following, we will derive the $C^2$ bound of $\ue$ on $\partial B_\e$ in the double normal directions.
For any $p\in\partial B_\e,$ let $\{\tau_1, \cdots, \tau_{n-1}\}$ be the orthonormal frame of the tangent hyperplane of $\partial B_\e$ at $p,$
and let $\tau_n$ be the inward unit normal of $\partial B_\e$ at $p.$ Then at $p$ we have
\be\label{dp3.3}
\sigma_{k-1}(\bar D^2\ue)\ue_{nn}+\sigma_{k}(\bar D^2\ue)
-\sum\limits_{\beta=1}^{n-1}\sigma_{k-2}(\bar D^2\ue|\ue_{\beta\beta})(\ue_{\beta n})^2=1,
\ee
where $\bar D^2\ue:=(\ue_{\alpha\beta})_{1\leq\alpha, \beta\leq n-1}.$
Plugging \eqref{dp3.1} into \eqref{dp3.3} we get
\[C_{n-1}^{k-1}\ue_{nn}+C_{n-1}^k\ue_n\e^{-1}=\e^{k-1}(\ue_n)^{1-k}+C_{n-2}^{k-2}\e(\ue_n)^{-1}\sum\limits_{\beta=1}^{n-1}(\ue_{\beta n})^2.\]
In view of Lemma \ref{dp-c1inside-lem} and Lemma \ref{dp-c2mix-inside-lem} we obtain
\begin{lemma}
\label{dp-c2normal-inside-lem}(\textbf{$C^2$ bound on $\partial B_\e$ in double normal directions})
Let $\ue$ be the solution of \eqref{dp.1}, then on $\partial B_\e,$ we have
\[-C_1\ue_{\nu}\e^{-1}\leq(\ue)_{\nu\nu}\leq C_2\ue_\nu\e^{-1}\,\,\mbox{on $\partial{B_\e},$}\]
where $C_1, C_2>0$ are positive constants independent of $\e$ and $\nu$ is the inward unit normal of $\partial B_\e$ (pointing in to $\Omega^\e$).
\end{lemma}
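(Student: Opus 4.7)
The plan is to read off the bound directly from the boundary identity derived in the paragraph just preceding the statement, which expresses the $k$-Hessian equation at a point $p\in\partial B_\e$ as
\[
C_{n-1}^{k-1}\ue_{nn}+C_{n-1}^k\ue_n\e^{-1}=\e^{k-1}(\ue_n)^{1-k}+C_{n-2}^{k-2}\e(\ue_n)^{-1}\sum_{\beta=1}^{n-1}(\ue_{\beta n})^2.
\]
Solving for $\ue_{nn}$ and estimating each of the three terms on the right separately should immediately yield the two-sided bound $|\ue_{nn}|\leq C\,\ue_\nu/\e$.

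The first preparatory step is to install a uniform lower bound on $\ue_n$ that prevents the factor $(\ue_n)^{1-k}$ from blowing up. This is supplied by the subsolution $\ul u^\e$ of Subsection \ref{dpsub1} together with Lemma \ref{dp-c1inside-lem}: one computes $\partial_\nu\ul u^\e=\e/C_n^k$ on $\partial B_\e$, hence $\ue_\nu\geq\e/C_n^k$ and in particular $\ue_n/\e$ is bounded below by a positive constant depending only on $n,k$. Consequently $\e^{k-1}(\ue_n)^{1-k}$ is uniformly bounded, and a bounded quantity is in particular dominated by a constant multiple of $\ue_n/\e$.

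The second preparatory step disposes of the cross term using Lemma \ref{dp-c2mix-inside-lem}, which gives $|\ue_{\beta n}|\leq C\,\ue_n/\e$ for every tangential index $\beta$. Substituting this bound into $C_{n-2}^{k-2}\e(\ue_n)^{-1}\sum_\beta(\ue_{\beta n})^2$ produces a quantity again dominated by $C\,\ue_n/\e$. Combining the two steps, the entire right-hand side of the boundary identity is $\leq C\,\ue_n/\e$ in absolute value. Isolating $\ue_{nn}$, the negative term $-C_{n-1}^k\ue_n\e^{-1}$ furnishes the lower bound $\ue_{nn}\geq -C_1\ue_\nu/\e$, while the right-hand side bounds furnish the upper bound $\ue_{nn}\leq C_2\ue_\nu/\e$.

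The only subtle point is a scaling check: the mixed estimate from Lemma \ref{dp-c2mix-inside-lem} is scale-critical with both sides of order $\e^{-1}$, and the prefactor $\e(\ue_n)^{-1}$ in front of $\sum(\ue_{\beta n})^2$ is precisely what pulls the resulting estimate back down to order $\ue_n/\e$ rather than something larger. So the content of the lemma is essentially a numerology verification that every power of $\e$ appearing on the right of the boundary identity matches the target scale $\ue_\nu\e^{-1}$; no new analytic ideas beyond the already-established $C^1$ bound of Lemma \ref{dp-c1inside-lem} and the tangential-normal $C^2$ bound of Lemma \ref{dp-c2mix-inside-lem} are required.
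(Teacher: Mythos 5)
Your proposal is correct and is essentially the paper's own argument: the paper derives the same boundary identity, then invokes Lemma \ref{dp-c1inside-lem} (whose subsolution side gives exactly your lower bound $\ue_\nu\geq c\,\e$, making $\e^{k-1}(\ue_n)^{1-k}$ bounded and hence dominated by $C\ue_n/\e$) and Lemma \ref{dp-c2mix-inside-lem} to control the cross term, exactly as you do. Your write-up just makes explicit the term-by-term estimates that the paper leaves to the reader.
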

Combining Remark \ref{dprmk1}, Remark \ref{dprmk2}, Lemma \ref{dp-c2mix-inside-lem}, and Lemma \ref{dp-c2normal-inside-lem} we conclude
\begin{lemma}
\label{dp-inside-c2-lem}
Let $\ue$ be the solution of \eqref{dp.1}. Then on $\partial B_\e$ we have,
when $\frac{n}{k}>2,$ $|D^2\ue|<C_1\e^{-2};$
when $\frac{n}{k}=2,$ $|D^2 \ue|<C_2\e^{-2}|\log\e|^{-1};$ and when $\frac{n}{k}<2,$ $|D^2\ue|<C_3\e^{-n/k}.$
Here $C_1, C_2,$ and $C_3>0$ are positive constants independent of $\e.$
\end{lemma}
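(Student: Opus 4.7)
The statement is essentially an aggregation of the three preceding ingredients, so the plan is to split the Hessian at a point $p\in\partial B_\e$ into its tangential, mixed, and double-normal blocks, estimate each block, and then take the worst of the three resulting rates.

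First, I would fix $p\in\partial B_\e$ and choose orthonormal coordinates $\{\tau_1,\dots,\tau_{n-1},\nu\}$ with $\nu$ the inward unit normal, so that every second derivative $\ue_{ij}(p)$ is one of the following three types: pure tangential $\ue_{\tau_\alpha\tau_\beta}$, mixed $\ue_{\tau_\alpha\nu}$, or double normal $\ue_{\nu\nu}$. The pure tangential entries are controlled directly by Remark \ref{dprmk2}, which already gives the desired $\e^{-2}$, $\e^{-2}|\log\e|^{-1}$, and $\e^{-n/k}$ bounds in the three regimes $n/k>2$, $n/k=2$, and $n/k<2$ respectively. So these entries require nothing further.

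Next, for the mixed entries I would invoke Lemma \ref{dp-c2mix-inside-lem}, which gives $|\ue_{\tau\nu}|\leq (C/\e)\,\ue_\nu$, and then substitute the boundary gradient bound from Remark \ref{dprmk1}. Multiplying the three possible bounds for $\ue_\nu$ by $\e^{-1}$ produces exactly $C\e^{-2}$ when $n/k>2$, $C\e^{-2}|\log\e|^{-1}$ when $n/k=2$, and $C\e^{-n/k}$ when $n/k<2$. The same calculation, applied with Lemma \ref{dp-c2normal-inside-lem} instead (which bounds $|\ue_{\nu\nu}|$ by $C\ue_\nu/\e$ from both sides), handles the double-normal entry and gives identical rates.

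Finally, I would observe that since the Hessian has a fixed number of entries and each entry has been bounded by the same rate within each regime, the full operator norm $|D^2\ue(p)|$ is controlled by the same quantity up to an absolute constant; taking the supremum over $p\in\partial B_\e$ yields the three stated estimates. There is no real obstacle here, as the proof is purely bookkeeping: the analytic content has already been done in Lemmas \ref{dp-c1inside-lem}, \ref{dp-c2mix-inside-lem}, and \ref{dp-c2normal-inside-lem}. The only thing to be careful about is that the constants $C_1,C_2,C_3$ depend only on $\Omega_1$, $n$, and $k$ (not on $\e$), which is transparent from the statements being combined.
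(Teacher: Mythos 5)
Your proposal is correct and is exactly the paper's argument: the lemma is stated there as an immediate consequence of combining Remark \ref{dprmk1}, Remark \ref{dprmk2}, Lemma \ref{dp-c2mix-inside-lem}, and Lemma \ref{dp-c2normal-inside-lem}, which is precisely the block-by-block bookkeeping you carry out.
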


Next, we will establish the $C^2$ estimates of $\ue$ on $\partial\Omega_1.$ Let $p\in\partial\Omega_1$ be an arbitrary point on $\partial\Omega_1.$
Without loss of generality, we may choose local coordinates $\{\tilde x_1, \cdots, \tilde x_n\}$ at $p$ such that $\tilde x_n$ axis is the inward normal of
$\partial\Omega_1.$ Then the boundary near $p$ can be written as
\[\tilde x_n=\rho(\tilde x')=\frac{1}{2}\sum\limits_{\alpha=1}^{n-1}\kappa_\alpha\tilde x_\alpha^2+O(|\tilde x'|^3),\]
where $\kappa_1, \cdots, \kappa_{n-1}$ are the principal curvatures of $\partial\Omega_1$ at $p$
and $\tilde x'=(\tilde x_1, \cdots, \tilde x_{n-1}).$
Let $T=\frac{\partial}{\partial\tilde x_{\alpha}}+\kappa_\alpha\left(\tilde x_\alpha\frac{\partial}{\partial \tilde x_n}-
\tilde x_n\frac{\partial}{\partial\tilde x_\alpha}\right),$
and denote $\tilde B_{\delta_0}=B_{\delta_0}(p)\cap\Omega^\e.$ Let $\mathcal L:=\sigma_k^{ij}\partial_{ij},$ then we have
\[
\begin{aligned}
\mathcal LTu&=0\,\,\mbox{in $\tilde B_{\delta_0}$}\\
Tu&=O(|\tilde x'|^2)\,\,\mbox{on $\partial\tilde B_{\delta_0}\cap\partial\Omega_1.$}
\end{aligned}
\]
In view of Lemma \ref{dp-c1Udelta-lem}, we also know that $|Tu|\leq C$ on $\partial\tilde B_{\delta_0}\setminus\partial\Omega_1$
for some $C>0$ independent of $\epsilon$ and $\delta_0.$ Here we always assume $\delta_0<\delta$ and $\delta>0$ is the constant in Lemma \ref{dp-c1Udelta-lem}.

Let $\underline u=C\psi$ be a subsolution of \eqref{dp.1} for some $C>2$. Since $\underline u$ is $k$-convex, it is easy to see that there exists
$\theta>0$ such that
\[\lambda[D^2(\underline u-\theta|\tilde x|^2)]\in\Gamma_k\]
and
\[\sigma_k[D^2(\underline u-\theta|\tilde x|^2)]>(3/2)^k\,\,\mbox{in $\tilde B_{\delta_0}.$}\]
Here $\theta$ is a small constant independent of $\epsilon$ and $\delta_0.$ Consider
$h=u-\underline u+\theta|\tilde x|^2,$ we get $h\geq\theta\delta_0^2$ on $\partial\tilde B_{\delta_0}\setminus\partial\Omega_1$
and $h\geq 0$ on $\partial\tilde B_{\delta_0}\cap\partial\Omega_1$ with $h(p)=0.$ Moreover, by the concavity of
$\sigma_k^{1/k}$ we have $\mathcal Lh\leq-\frac{1}{2}k.$ In view of the standard maximum principle we conclude
$\pm Tu<Ah$ for some $A>0$ independent of $\epsilon$ (depending on $\delta_0$ though). Therefore, we have
\[|u_{\alpha n}(p)|<Ah_n(p)<A_1.\]
Here $A_1$ is a constant independent of $\epsilon.$

Following a similar argument as Lemma \ref{dp-c2mix-inside-lem}, we obtain $|u_{nn}|<C$ on $\partial\Omega_1$ for some $C>0$ independent of $\epsilon.$ We conclude
\begin{lemma}(\textbf{$C^2$ boundary estimates on $\partial\Omega_1$})
\label{c2-outsidebdry-lem}
Let $\ue$ be the solution of \eqref{dp.1}, then on $\partial\Omega_1,$ we have
\[|D^2\ue|<C\,\,\mbox{on $\partial{\Omega_1},$}\]
where $C>0$ is a positive constant independent of $\e$.
\end{lemma}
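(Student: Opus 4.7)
The plan is to bound each of the three second-derivative blocks at any $p\in\partial\Omega_1$ separately in the local frame already set up above, with $\partial\Omega_1$ graphed as $\tilde x_n=\rho(\tilde x')$ near $p$.

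For the tangent-tangent block, differentiating $\ue(\tilde x',\rho(\tilde x'))\equiv 0$ twice at $p$ gives
\[\ue_{\alpha\beta}(p) = -\ue_n(p)\,\rho_{\alpha\beta}(0),\qquad 1\leq\alpha,\beta\leq n-1,\]
which is bounded $\epsilon$-uniformly by Lemma \ref{dp-c1outside-lem}. The mixed tangent-normal block is precisely what the paragraph immediately preceding the lemma delivers: $T$ commutes with the equation so $\mathcal L(Tu)=0$ in $\tilde B_{\delta_0}$; the barrier $h=\ue-\underline u+\theta|\tilde x|^2$ satisfies $\mathcal L h\leq -k/2$ and dominates $|Tu|$ on $\partial\tilde B_{\delta_0}$; the maximum principle together with $h(p)=Tu(p)=0$ then yields $|\ue_{\alpha n}(p)|\leq A\, h_n(p)\leq A_1$.

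What remains is the double-normal estimate, which I would obtain by mimicking Lemma \ref{dp-c2normal-inside-lem}: expanding the equation at $p$ in the tangent/normal splitting,
\[\sigma_{k-1}(\bar D^2\ue)\,\ue_{nn} = 1-\sigma_k(\bar D^2\ue)+\sum_{\beta=1}^{n-1}\sigma_{k-2}(\bar D^2\ue\,|\,\ue_{\beta\beta})(\ue_{\beta n})^2,\]
whose right-hand side is already $\epsilon$-uniformly bounded by the previous two blocks. Solving for $\ue_{nn}$ needs an $\epsilon$-independent positive lower bound on $\sigma_{k-1}(\bar D^2\ue)(p)$, which I would extract by Hopf's lemma applied to $\psi-\ue\geq 0$ in $\Omega^\epsilon$ (via Lemma \ref{dp-c0-lem}): this forces $|\ue_n(p)|\geq|\psi_n(p)|\geq c_0>0$ with $c_0$ depending only on $\Omega_1$, so that in principal coordinates $\ue_{\alpha\beta}(p)=|\ue_n(p)|\,\kappa_{\alpha\beta}(p)$, and the $(k-1)$-convexity of $\partial\Omega_1$ built into the problem's hypotheses yields $\sigma_{k-1}(\bar D^2\ue)(p)\geq c_1>0$. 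This produces the upper bound on $\ue_{nn}(p)$; the matching lower bound is free from $k$-admissibility, since $\sigma_1(D^2\ue)=\Delta\ue>0$ forces $\ue_{nn}\geq -\sum_{\alpha<n}\ue_{\alpha\alpha}$, which is already controlled in the tangent-tangent step.

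The one nontrivial ingredient is the $\epsilon$-uniform lower bound on $\sigma_{k-1}(\bar D^2\ue)$ at $p$; everything else is routine boundary calculus once the barriers $\underline u=C\psi$ and $\psi$ together with the $C^1$ estimates of Section \ref{dpsub2} are in hand.
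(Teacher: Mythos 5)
Your decomposition into tangential, mixed, and double--normal blocks, and your treatment of the first two, coincide with the paper's: the tangential bound follows from differentiating the boundary condition twice together with Lemma \ref{dp-c1outside-lem}, and the mixed bound is exactly the $T$--barrier argument in the paragraph preceding the lemma. The problem is the double--normal step. You correctly isolate the crux --- an $\e$-independent positive lower bound for $\sigma_{k-1}(\bar D^2\ue)(p)$ --- but the justification you offer for it is not available. Theorem \ref{thm1} assumes only that $\Omega_1$ is a smooth bounded \emph{convex} domain; there is no strict $(k-1)$-convexity hypothesis on $\partial\Omega_1$. At a boundary point $p$ where fewer than $k-1$ principal curvatures of $\partial\Omega_1$ are positive (perfectly possible for a smooth convex domain, e.g.\ near a flattened face of a smoothed cube), one has $\sigma_{k-1}(\bar D^2\ue)(p)=|\ue_{n}(p)|^{k-1}\sigma_{k-1}(\kappa_1,\dots,\kappa_{n-1})=0$, the identity you display then says nothing about $\ue_{nn}(p)$, and the division step collapses. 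The Hopf-type bound $|\ue_\nu(p)|\geq c_0>0$ is fine (it is already contained in Lemma \ref{dp-c1outside-lem}), but it cannot compensate for vanishing boundary curvature.

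The way to close this --- and what the paper's one-sentence reference to ``a similar argument'' is implicitly leaning on --- is the subsolution-based double--normal estimate in the style of Caffarelli--Nirenberg--Spruck, Trudinger, and Guan: since $\ul u=C\psi$ is a \emph{strict} subsolution of \eqref{dp.1} that agrees with $\ue$ on $\partial\Omega_1$, one first establishes $\min_{\partial\Omega_1}\sigma_{k-1}(\bar D^2\ue)\geq c_1>0$ with $c_1$ depending only on $\Omega_1$, $\psi$, and the already proved tangential and mixed bounds (hence independent of $\e$), and only then solves the equation for $\ue_{nn}$. That step is genuinely nontrivial and is the missing ingredient in your write-up. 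Your lower bound $\ue_{nn}\geq-\sum_{\alpha<n}\ue_{\alpha\alpha}$ from $\Delta\ue>0$ is correct and consistent with the paper.
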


\begin{lemma}(\textbf{$C^2$ global estimates})
\label{c2-global-lem}
Let $\ue$ be the solution of \eqref{dp.1},then we have
\[|D^2\ue|<C(1+\sup\limits_{\partial\Omega^\e}|D^2\ue|)\]
where $C>0$ is a positive constant independent of $\e$.
\end{lemma}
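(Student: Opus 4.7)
The plan is to show that every pure second derivative $\ue_{\xi\xi}$, for $\xi$ a unit vector, cannot develop an interior maximum far above its boundary trace, and then to upgrade this into a bound on $|D^2\ue|$. For the second step I will use the fact that $\ue$ is $k$-convex with $k\geq 2$: since $\sigma_2(D^2\ue)\geq 0$ the identity $\sigma_1^2=\sum_i\lambda_i^2+2\sigma_2$ gives
\[
\sum_i\lambda_i^2\;\leq\;\sigma_1(D^2\ue)^2\;\leq\;n^2\,\lambda_{\max}(D^2\ue)^2,
\]
whence $|D^2\ue|\leq n\,\lambda_{\max}(D^2\ue)$. So it is enough to bound $\sup_{\Omega^\e}\ue_{\xi\xi}$ uniformly in $\xi$.

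For the interior estimate, I would introduce the auxiliary function
\[
w(x)\;=\;\ue_{\xi\xi}(x)+a|x|^2
\]
with $a>0$ a small constant to be chosen. Differentiating $\sigma_k(D^2\ue)=1$ once and twice in $\xi$ yields $\sigma_k^{ij}\ue_{ij\xi}=0$ and $\sigma_k^{ij}\ue_{ij\xi\xi}+\sigma_k^{ij,pq}\ue_{ij\xi}\ue_{pq\xi}=0$. Passing to the normalized operator $F=\sigma_k^{1/k}$, which is concave on $\Gamma_k$, and combining its concavity $F^{ij,pq}\eta_{ij}\eta_{pq}\leq 0$ with the first-order identity above forces $\sigma_k^{ij,pq}\ue_{ij\xi}\ue_{pq\xi}\leq 0$, so $\sigma_k^{ij}\ue_{ij\xi\xi}\geq 0$. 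Therefore
\[
\sigma_k^{ij}w_{ij}\;\geq\;2a\sum_i\sigma_k^{ii}\;=\;2a(n-k+1)\,\sigma_{k-1}(D^2\ue),
\]
and by the Newton--Maclaurin inequality applied to $\sigma_k(D^2\ue)=1$ on $\Gamma_k$, one has $\sigma_{k-1}(D^2\ue)\geq c(n,k)>0$. Hence $\sigma_k^{ij}w_{ij}>0$ strictly in $\Omega^\e$. Since $(\sigma_k^{ij})$ is positive definite on $\Gamma_k$, the maximum principle rules out an interior maximum of $w$, so $\sup_{\Omega^\e}w=\sup_{\partial\Omega^\e}w$. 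This yields
\[
\sup_{\Omega^\e}\ue_{\xi\xi}\;\leq\;\sup_{\partial\Omega^\e}\ue_{\xi\xi}+a\,(\diam\Omega_1)^2\;\leq\;\sup_{\partial\Omega^\e}|D^2\ue|+C(n,k,\Omega_1),
\]
and taking the supremum over $|\xi|=1$ together with the reduction above gives the desired inequality with constants independent of $\e$.

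The only step that is not purely formal is the concavity manipulation: one has to convert concavity of $\sigma_k^{1/k}$ into the sign $\sigma_k^{ij,pq}\ue_{ij\xi}\ue_{pq\xi}\leq 0$, which relies essentially on $\sigma_k(D^2\ue)=1$ and on the first-order identity $\sigma_k^{ij}\ue_{ij\xi}=0$. Once that is in place the rest is entirely standard, and all constants depend only on $n$, $k$, and $\diam\Omega_1$, which is exactly what is needed for $\e$-independence. I do not anticipate any genuine obstacle beyond this bookkeeping, as the crucial positivity $\sum\sigma_k^{ii}\geq c(n,k)>0$ is a universal consequence of $D^2\ue\in\Gamma_k$ and $\sigma_k(D^2\ue)=1$.
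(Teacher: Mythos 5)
Your proposal is correct and follows essentially the same route as the paper: differentiate the equation twice, use concavity of $\sigma_k^{1/k}$ together with the first-order identity to conclude that second derivatives are subsolutions of the linearized operator, and apply the maximum principle; the paper simply runs this for $\Delta\ue$ rather than for each $\ue_{\xi\xi}$, and both arguments rely on the same $k$-convexity fact $\sum_i\lambda_i^2\leq\sigma_1^2$ to pass from a trace (or largest eigenvalue) bound to a bound on $|D^2\ue|$. Your additional term $a|x|^2$ and the positivity of $\sum_i\sigma_k^{ii}$ are harmless but unnecessary, since the weak maximum principle already applies to a (non-strict) subsolution.
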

\begin{proof}
In the following we will drop the superscript $\e$ and write $u$ instead of $\ue.$ Since $u$ is a solution of \eqref{dp.1}, $u$ satisfies $\sigma_k^{1/k}(D^2 u)=1.$ Differentiating this equality twice we get
\[F^{ij}(\Delta u)_{ij}+F^{pq, rs}u_{pql}u_{rsl}=0,\]
where $F^{ij}=\frac{\partial\sigma_k^{1/k}}{\partial u_{ij}}$ and $F^{pq, rs}=\frac{\partial^2\sigma_k^{1/k}}{\partial u_{pq}\partial u_{rs}}.$
In view of the concavity of $\sigma_k^{1/k}$ we get $F^{ij}(\Delta u)_{ij}\geq 0.$ Therefore,
\[\Delta u<C\left(1+\sup\limits_{\partial\Omega^\e}\Delta u\right),\]
which implies
\[|D^2 u|<C(1+\sup\limits_{\partial\Omega^\e}|D^2u|).\]
\end{proof}

Combining Lemmas \ref{dp-c0-lem}, \ref{dp-c1global-lem}, and \ref{c2-global-lem} we conclude
\begin{theorem}\label{dp-existence}
For any $\e>0,$ there exists a unique $k$-convex solution $u\in C^\infty(\Omega^\e)$ satisfying \eqref{dp.1} with
\[\|u\|_{C^2}<C,\]
where $C=C(\e)>0$ depends on $\e.$
\end{theorem}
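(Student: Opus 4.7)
The plan is to establish existence by the method of continuity — with the $C^0, C^1, C^2$ a priori bounds of Subsections \ref{dpsub1}--\ref{dpsub3} supplying the necessary compactness — and then to bootstrap to $C^\infty$ via Evans--Krylov and Schauder theory. Uniqueness will follow from the concavity of $\s_k^{1/k}$ on $\Gamma_k$ and the maximum principle. The $C^2$ bound demanded by the statement is exactly what Lemmas \ref{c2-outsidebdry-lem}, \ref{dp-inside-c2-lem}, and \ref{c2-global-lem} produce once a smooth $k$-convex solution is in hand, so the real content is existence.

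For fixed $\e\in(0,\e_0)$, a convenient continuity family is to deform the equation while keeping the boundary values of \eqref{dp.1} frozen. Choose a strictly $k$-convex $v\in C^\infty(\ol\Omega^\e)$ with $v=0$ on $\partial\Omega_1$ and $v=-M$ on $\partial B_\e(0)$ (e.g.\ a small $k$-convex modification of $\ul u^\e$ with a correction supported near $\partial\Omega_1$), and consider
\[
\left\{
\begin{aligned}
\s_k(D^2 u^{(t)})&=t+(1-t)\s_k(D^2 v)\,\,&\mbox{in $\Omega^\e,$}\\
u^{(t)}&=0\,\,&\mbox{on $\partial\Omega_1,$}\\
u^{(t)}&=-M\,\,&\mbox{on $\partial B_\e(0),$}
\end{aligned}
\right.
\]
for $t\in[0,1]$. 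At $t=0$, $u^{(0)}=v$ is the evident solution; at $t=1$ we recover \eqref{dp.1}, and the right-hand side is bounded below by a positive constant uniformly in $t$.

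Let $T\subset[0,1]$ be the set of parameters for which a strictly $k$-convex $u^{(t)}\in C^{2,\alpha}(\ol\Omega^\e)$ exists. Openness of $T$ follows from the implicit function theorem in $C^{2,\alpha}_0(\ol\Omega^\e)$: the linearization $\s_k^{ij}(D^2u^{(t)})\partial_{ij}$ has positive definite coefficient matrix on $\Gamma_k$, hence is invertible by Schauder theory. Closedness of $T$ is where the a priori estimates enter: since the boundary data and the positivity of the right-hand side are $t$-uniform, the estimates of Lemmas \ref{dp-c0-lem}, \ref{dp-c1global-lem}, \ref{dp-inside-c2-lem}, \ref{c2-outsidebdry-lem}, and \ref{c2-global-lem} apply along the path with constants depending on $\e$ but not on $t$. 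Evans--Krylov then promotes $C^2$ to $C^{2,\alpha}$ and Schauder bootstrap upgrades to $C^\infty$, so limits of solutions remain solutions. Hence $T=[0,1]$, and $1\in T$ gives the desired solution. For uniqueness, if $u_1, u_2$ are two $k$-convex solutions of \eqref{dp.1}, the concavity of $\s_k^{1/k}$ writes $\s_k^{1/k}(D^2 u_i)=1$ in concave form, so $w=u_1-u_2$ satisfies a linear elliptic equation $a^{ij}w_{ij}=0$ with $w=0$ on $\partial\Omega^\e$; the maximum principle forces $w\equiv 0$.

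The main anticipated obstacle is verifying that the $C^2$ boundary estimate on $\partial B_\e$ — in particular the double-normal bound of Lemma \ref{dp-c2normal-inside-lem}, which exploited the constancy $u^\e|_{\partial B_\e}\equiv -M$ via formula \eqref{dp3.1} — remains stable along the continuity path. This was precisely the motivation for perturbing the equation rather than the inner boundary data: with the boundary values of $u^{(t)}$ held at $0$ on $\partial\Omega_1$ and at the constant $-M$ on $\partial B_\e$ for every $t\in[0,1]$, identity \eqref{dp3.1} and the subsequent estimates of Subsection \ref{dpsub3} transfer to $u^{(t)}$ with constants depending only on $\e$, $\Omega_1$, $M$, and on $\|v\|_{C^2}$, yielding the $t$-uniform $C^2$ bound required for closedness.
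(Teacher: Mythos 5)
Your proposal is correct in outline and takes essentially the route the paper implicitly relies on: the real content of Theorem \ref{dp-existence} is the a priori estimates of Subsections \ref{dpsub1}--\ref{dpsub3}, and the paper's own ``proof'' is just the one-line combination of Lemmas \ref{dp-c0-lem}, \ref{dp-c1global-lem} and \ref{c2-global-lem} with the standard continuity method, Evans--Krylov, and the maximum principle, exactly as you spell out. The one step you gloss over is the starting point of the continuity path: a ``small correction supported near $\partial\Omega_1$'' cannot produce your $v$, since $\ul u^\e$ sits as much as $M-c(\Omega_1)$ below zero on $\partial\Omega_1$, and a cutoff of that height in a thin collar has large negative Hessian eigenvalues that destroy $k$-convexity; the standard fix is to take $v$ as a smooth regularization of $\max(\ul u^\e, C\psi)$ (both match the data on one boundary component and lie strictly below it on the other), or simply to invoke the subsolution-based existence theory of Guan/CNS. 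Note also that along your path the right-hand side $t+(1-t)\s_k(D^2v)$ need not equal $1$, so the barriers $\ul u^\e$ and $\psi$ must be rescaled by $(\min\s_k(D^2v))^{-1/k}$ and $(\max\s_k(D^2v))^{-1/k}$ to remain sub/supersolutions; this only changes constants by $t$-independent factors, so closedness survives.
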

Note that similar results and techniques of section \ref{dirichlet problem} also appeared in Ma-Zhang \cite{MZ22} and Xiao \cite{Xiao22}.

\bigskip

\section{Hessian measures}
\label{hm}

Let $\{u^{\e_m}\}$ be a sequence of solutions of \eqref{dp.1} with $\e_m\goto 0$ as $m\goto\infty.$ In this section, we will show that $\{u^{\e_m}\}$ converges locally in measure to a function $v$, and $v$ satisfies
$F_k[v]=1$ in the viscosity sense.

The following definition of $k$-convex function is an extension of the Definition \ref{def1.0}.
\begin{defin}(See \cite{TW99})
An upper semi-continuous function $u:\Omega\rightarrow [-\infty, \infty)$ is called \textbf{$k$-convex} in $\Omega$
 if $F_k[q]:=\sigma_k(D^2 q)\geq 0$ for all quadratic polynomials $q$ for which the difference $u-q$ has a finite local maximum in $\Omega.$
 We shall also call a $k$-convex function \textbf{proper} if it doesn't assume the value $-\infty$ identically on any component of $\Omega.$ We denote the class of
 proper $k$-convex functions in $\Omega$ by $\Phi^k(\Omega).$
\end{defin}

Since we will need to use mollifier to smooth our functions, we want to extend the domain of definition a little bit. Recall that $\partial\Omega_1$ is smooth and
$u^\epsilon\in C^\infty(\bar\Omega^{\epsilon}),$ where $\ue$ is the solution of \eqref{dp.1}. We can extend $u^\e$ to the other side of $\partial\Omega_1$ by Taylor's expansion in the normal bundle. We will still denote this expansion by $u^\e,$ and $u^\e$ is defined on $\Omega_1^\delta\setminus B_\e(0),$ where
\[\Omega_1^\delta:=\{x\in\mathbb R^n\mid\text{dist}(x, \Omega_1)<\delta\}\]
for some fixed small $\delta>0.$ Moreover, $u^\e$ satisfies
\[
\begin{aligned}
\sigma_k(D^2 u^\e)&=1\,\,\mbox{in $\Omega_1\setminus\bar B_{\e}(0)$}\\
\sigma_k(D^2 u^\e)&>1/2\,\,\mbox{in $\Omega_1^{\delta}\setminus\bar B_{\e}(0)$}.
\end{aligned}
\]
We will denote
\[\tilde u^\e=\left\{
\begin{aligned}
u^\e\,\,&\mbox{in $\Omega^\delta_1\setminus B_\e(0)$}\\
-M\,\, &\mbox{in $\bar B_\e(0).$}
\end{aligned}
\right.
\]
\begin{lemma}
\label{hmlem1}
Let $\tilde u^\e$ be defined as above, then $\tilde u^\e\in\Phi^k(\Omega_1^\delta).$
\end{lemma}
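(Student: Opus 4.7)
The plan is to verify the three defining conditions of $\Phi^k(\Omega_1^\delta)$: upper semi-continuity, properness, and the quadratic polynomial test requiring $\sigma_k(D^2 q)\geq 0$ at every point where $\tilde u^\e - q$ has a finite local maximum. Upper semi-continuity (in fact continuity) is immediate because $u^\e\equiv -M$ on $\partial B_\e(0)$, so the two pieces of $\tilde u^\e$ match across the interface, and properness is clear since $\tilde u^\e$ is everywhere finite. The real content is the polynomial test, which I would split according to the location of the local maximum point $x_0$.

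If $x_0\in\Omega_1^\delta\setminus\bar B_\e(0)$, then on a small neighborhood $\tilde u^\e$ agrees with the smooth function $u^\e$. Choosing $\delta$ small enough, the Taylor extension remains $k$-admissible by continuity of $\sigma_j(D^2u^\e)$ for $j<k$ together with the quantitative bound $\sigma_k(D^2u^\e)>1/2$, so $\lambda(D^2u^\e(x_0))\in\Gamma_k$. The classical second-derivative test at a smooth local maximum of $u^\e-q$ yields $D^2q\geq D^2u^\e(x_0)$. Since $\Gamma_k$ is a convex cone closed under the addition of positive semidefinite matrices, $\lambda(D^2q)\in\overline{\Gamma_k}$, and in particular $\sigma_k(D^2q)\geq 0$. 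If instead $x_0\in B_\e(0)$, then $\tilde u^\e\equiv -M$ near $x_0$, so $q$ has a local minimum at $x_0$, forcing $D^2q\geq 0$ and $\sigma_k(D^2q)\geq 0$ trivially.

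The delicate case, which I expect to be the main obstacle, is $x_0\in\partial B_\e(0)$; my plan is to show this case cannot occur. For $x\in\bar B_\e(0)$ near $x_0$, the inequality $\tilde u^\e(x)-q(x)\leq -M-q(x_0)$ reduces to $q(x)\geq q(x_0)$, so $q$ attains a local minimum at $x_0$ relative to $\bar B_\e(0)$. This forces $\nabla_\tau q(x_0)=0$ for every tangent direction $\tau$ to $\partial B_\e(0)$ and $q_\nu(x_0)\leq 0$, where $\nu$ denotes the unit normal pointing into $\Omega^\e$. For $x\in\overline{\Omega_1^\delta\setminus B_\e(0)}$ near $x_0$, the same inequality becomes $u^\e(x)-q(x)\leq u^\e(x_0)-q(x_0)$, so the smooth function $u^\e-q$ attains a local maximum at $x_0$ relative to this closed region, and the one-sided normal test yields $(u^\e)_\nu(x_0)-q_\nu(x_0)\leq 0$. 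Combining the two conclusions gives $(u^\e)_\nu(x_0)\leq q_\nu(x_0)\leq 0$, which flatly contradicts Lemma \ref{dp-c1inside-lem}, according to which $(u^\e)_\nu>0$ on $\partial B_\e(0)$. Hence no local maximum can lie on $\partial B_\e(0)$, the polynomial test holds in all remaining cases, and the proof is complete.
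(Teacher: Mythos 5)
Your proof is correct, but it handles the only delicate case --- a local maximum point $x_0\in\partial B_\e(0)$ --- by a genuinely different mechanism than the paper. The paper observes that the radial subsolution $\ul u^\e$ of \eqref{lower barrier} satisfies $\ul u^\e\leq\tilde u^\e$ with equality exactly on $\partial B_\e(0)$, so any quadratic $q$ for which $\tilde u^\e-q$ has a local maximum at $x_0$ also gives $\ul u^\e-q$ a local maximum there; since $D^2\ul u^\e=(C_n^k)^{-1}I$, this yields $D^2q\geq (C_n^k)^{-1}I$ and hence the quantitative conclusion $\sigma_k(D^2q)\geq 1$. You instead show the boundary case is vacuous: the one-sided minimum of $q$ from inside $\bar B_\e(0)$ forces $q_\nu(x_0)\leq 0$, the one-sided maximum of $\ue-q$ from $\ol{\Omega^\e}$ forces $(\ue)_\nu(x_0)\leq q_\nu(x_0)$, and together these contradict $(\ue)_\nu>\partial_\nu\ul u^\e=\e/C_n^k>0$ from Lemma \ref{dp-c1inside-lem}. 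Both arguments are sound (your first-order argument is legitimate since $\ue\in C^1(\bar\Omega^\e)$ and $q$ is smooth, and you correctly note that strict positivity of the inward normal derivative is what closes the contradiction). What the paper's touching-from-below argument buys is reusability: essentially the same comparison with $\ul u^{\e_i}$ reappears in the proof of Lemma \ref{hmlem3} (the subcase $\{x^{\e_i}\}\subset\partial B_{\e_i}(0)$), where the perturbation by $-\beta|x-x_0|^4$ makes a first-order contradiction less convenient; your argument buys a slightly more elementary, purely first-order proof of the present lemma. Your treatment of the two interior cases (second-derivative test plus the fact that $\Gamma_k$ is preserved under adding positive semidefinite matrices, and the trivial case $D^2q\geq0$ inside the ball) matches the paper's, and you are right to flag that $\delta$ must be small enough that the Taylor extension stays in $\Gamma_k$, a point the paper leaves implicit.
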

\begin{proof}
It is clear that $\tilde u^\e$ is a continuous function in $\Omega^\delta_1.$ In the following we will show $\tilde u^\e$ is $k$-convex.

Let $q$ be any quadratic polynomials such that $\tilde u^\e-q$ has a local maximum at some point $x\in\Omega_1^\delta.$
When $x\in\Omega_1^\delta\setminus\partial B_\e(0),$ since $\tilde u^\e$ is smooth in a small neighborhood of $x$ and
$\sigma_k(D^2\tilde u^\e)\geq 0$ in this neighborhood, we have $\sigma_k(D^2 q)\geq 0.$

Now, consider the case when $x\in\partial B_\e(0).$ Let $\ul u^\e$ be the subsolution of \eqref{dp.1} defined in \eqref{lower barrier}. Since $\underline u^\e=\tilde u^\e$ on $\partial B_\e$ and $\underline u^\e<\tilde u^\e$
in $\Omega_1.$ It is clear that $\underline u^\e-q$ also achieves a local maximum at $x$ and we have $\sigma_k(D^2 q)\geq 1.$
\end{proof}

In the following we will look at the mollification of $\tilde u^\e.$ Let $\rho\in C_0^\infty(\mathbb R^n)$ be a spherically symmetric mollifier
satisfying $\rho(x)>0$ for $|x|<1,$ $\rho(x)=0$ for $|x|\geq 1,$ and $\int \rho=1.$ The mollification, $\tilde u^\e_h$ is defined by
\[\tilde u^\e_h=h^{-n}\int_{\mathbb R^n}\rho\left(\frac{x-y}{h}\right)\tilde u^\e(y)dy\]
for $0<h<\text{dist}(x, \partial\Omega_1^\delta).$

From the definition of mollification and Lemma 2.3 of \cite{TW99} we obtain
\begin{lemma}
\label{hmlem2}
$\tilde u^\e_h\in C^\infty(\Omega')\cap\Phi^k(\Omega')$ for any $\Omega'\subset\Omega_1^\delta$ satisfying $\text{dist}(\Omega', \partial\Omega_1^\delta)\geq h.$
Moreover, as $h\rightarrow 0,$ the sequence $\tilde u^\e_h\rightarrow \tilde u^\e.$
\end{lemma}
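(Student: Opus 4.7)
The plan is to verify the three claims in turn: smoothness of $\tilde u^\e_h$, membership in $\Phi^k(\Omega')$, and convergence as $h\to 0$. All three are standard properties of mollification, and the references already cited supply the technical content.

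First, for any $x\in\Omega'$ with $\text{dist}(\Omega', \partial\Omega_1^\delta)\geq h$, the integral
\[\tilde u^\e_h(x)=\int_{B_1(0)}\rho(y)\,\tilde u^\e(x-hy)\,dy\]
is well defined because $\tilde u^\e$ is bounded on the domain of integration. Differentiating under the integral sign and transferring derivatives onto the $C_0^\infty$ kernel $\rho$ yields $\tilde u^\e_h\in C^\infty(\Omega')$.

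Next, for the $k$-convexity, I would argue as follows. By Lemma \ref{hmlem1}, $\tilde u^\e\in\Phi^k(\Omega_1^\delta)$, and for each fixed $y\in B_1(0)$ the translate $x\mapsto\tilde u^\e(x-hy)$ is again $k$-convex on $\Omega'$, since the defining condition for $\Phi^k$ is translation-invariant. Thus $\tilde u^\e_h$ is a nonnegative weighted average with unit total mass of functions in $\Phi^k(\Omega')$. That such averages stay in $\Phi^k$ reflects the concavity of $\sigma_k^{1/k}$ on the Garding cone $\Gamma_k$: for $C^2$ functions it is immediate because a convex combination of Hessians in $\Gamma_k$ still lies in $\Gamma_k$ with $\sigma_k\geq 0$, and the viscosity definition then allows passage to the limit. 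This is precisely the content of Lemma~2.3 of \cite{TW99}, which I would cite directly rather than re-derive.

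Finally, for the convergence $\tilde u^\e_h\to \tilde u^\e$ as $h\to 0$, I would use continuity of $\tilde u^\e$ on $\Omega_1^\delta$ (which holds because $\tilde u^\e$ matches the value $-M$ on $\partial B_\e(0)$ from both sides by construction) together with the standard fact that mollification of a continuous function converges uniformly on compact subsets. The only subtlety worth naming is the transition across $\partial B_\e(0)$, where $\tilde u^\e$ is merely Lipschitz; one cannot expect any higher-order convergence there, but uniform convergence is all that is claimed here and all that is needed for the Hessian-measure arguments that follow.
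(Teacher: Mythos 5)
Your proposal is correct and follows essentially the same route as the paper, which likewise derives the lemma from the definition of mollification together with Lemma 2.3 of \cite{TW99}; the extra details you supply (differentiation under the integral, translation invariance of $\Phi^k$, uniform convergence from continuity of $\tilde u^\e$) are accurate fillings-in of that citation.
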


Notice that $\tilde u^{\e_1}(x)\geq \tilde u^{\e_2}(x)$ in $\Omega_1$ whenever $\e_1<\e_2.$ In conjunction with earlier $C^0$ and $C^1$ interior estimates, see Lemma \ref{dp-c0-lem} and Lemma \ref{dp-c1interior-lem}, we have
$\tilde u^\e\longrightarrow v$ in $C_{loc}^1\left(\Omega_1\setminus \{0\}\right).$ Note that since $\tilde u^\e(0)=-M$ for any $\e>0$, we get $v(0)=-M,$ thus $v$ may not be an upper-semi continuous function.
We redefine $v(0)=\limsup\limits_{x\rightarrow 0}v(x),$ and in the following, all $v(x)$ refers to this redefined $v(x).$

\begin{lemma}
\label{hmlem3}
Let $v(x)$ be defined as above, then $v(x)\in\Phi^k(\Omega_1).$
\end{lemma}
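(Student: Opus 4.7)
The plan is to verify the three defining conditions of $\Phi^k(\Omega_1)$ for $v$: properness, upper semicontinuity, and the quadratic-polynomial test. Properness is immediate from the barriers of Subsection \ref{dpsub1}, since $\ul u^\e \leq \tilde u^\e \leq \psi$ on $\Omega_1$ for every $\e > 0$; passing to the limit yields $|x|^2/(2C_n^k) - M \leq v(x) \leq \psi(x) < 0$ on $\Omega_1 \setminus \{0\}$, and $v(0) = \limsup_{x \to 0} v(x) \in [-M, 0)$. Upper semicontinuity away from the origin follows from the $C^1_{\text{loc}}$ convergence $\tilde u^\e \to v$ on $\Omega_1 \setminus \{0\}$ provided by Lemmas \ref{dp-c0-lem} and \ref{dp-c1interior-lem} combined with the monotonicity $\tilde u^{\e_1} \geq \tilde u^{\e_2}$ for $\e_1 < \e_2$, and at the origin it holds by the very definition $v(0) = \limsup_{x \to 0} v(x)$.

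For the quadratic-polynomial test, suppose $q$ is a quadratic polynomial such that $v - q$ attains a finite local maximum at a point $x_0 \in \Omega_1$. If $x_0 \neq 0$, I would proceed by the standard stability argument: replace $q$ by $q_\eta(x) = q(x) + \eta|x - x_0|^2$ to create a strict local maximum on some closed ball $\bar B \Subset \Omega_1 \setminus \{0\}$; the uniform convergence of $\tilde u^\e$ to $v$ on $\bar B$ then yields local maximizers $x_\e \to x_0$ of $\tilde u^\e - q_\eta$, and the $k$-convexity of $\tilde u^\e$ from Lemma \ref{hmlem1} gives $\sigma_k(D^2 q + 2\eta I)(x_\e) \geq 0$; sending $\e \to 0$ and then $\eta \to 0$ produces $\sigma_k(D^2 q) \geq 0$.

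The delicate case is $x_0 = 0$. I would pick a sequence $y_j \to 0$ with $y_j \neq 0$ and $v(y_j) \to v(0)$, so that the local-maximum assumption combined with $v(0) = \limsup_{x \to 0} v(x)$ forces $\sup_{B_r \setminus \{0\}}(v - q) = v(0) - q(0)$, attained only in the limit. To push these approximate maxima into the punctured region where the $k$-convexity of $v$ has already been established, I would perturb $q$ by small linear and quadratic corrections: for suitable parameters $a_j \to 0$ in $\R^n$ and $\eta_j \to 0^+$ selected by a standard compactness argument on the continuous function $v - q$ restricted to $\bar B_r \setminus B_{\rho_j}(0)$, the polynomial $q_j(x) := q(x) + \langle a_j, x - y_j \rangle - \eta_j |x - y_j|^2$ will have $v - q_j$ attain a genuine local maximum at some point $z_j \in \Omega_1 \setminus \{0\}$ close to $y_j$. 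Applying the previous case at $z_j$ yields $\sigma_k(D^2 q - 2\eta_j I)(z_j) \geq 0$, and letting $j \to \infty$ completes the verification.

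The hardest part is precisely this perturbation argument at the origin, where one must convert a limsup-type local maximum into a genuine interior maximum inside the punctured domain; this is exactly the role that upper semicontinuous regularization plays in the Trudinger--Wang theory of Hessian measures \cite{TW99}, and it is where the redefinition $v(0) = \limsup_{x \to 0} v(x)$ preceding the lemma is essential.
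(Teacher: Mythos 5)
Your verification of properness and upper semicontinuity, and your argument for the case $x_0\neq 0$, are sound and essentially the paper's Case~1 (the paper uses a boundary gap $v-q<c_0<0$ on $\partial B_\delta(x_0)$ together with, when needed, a $-\beta|x-x_0|^4$ correction; your $\eta|x-x_0|^2$ plays the same role). The gap is in the case $x_0=0$, which is precisely where the work lies. You assert that for suitable $a_j\goto 0$, $\eta_j\goto 0^+$ the function $v-q_j$ attains a \emph{genuine} local maximum at some $z_j\in\Omega_1\setminus\{0\}$ "by a standard compactness argument." Compactness on $\bar B_r\setminus B_{\rho_j}(0)$ only produces a maximizer that may sit on the inner sphere $\partial B_{\rho_j}(0)$, where the constraint is active and no viscosity information can be extracted; and small linear-plus-quadratic perturbations cannot in general dislodge a supremum that is attained only in the limit at the puncture. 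Concretely, for $w(x)=-|x|$ on a punctured ball with $w(0):=0=\limsup_{x\goto 0}w(x)$, every perturbation $w(x)+\langle a,x\rangle+\eta|x-y|^2$ with $|a|<1$ and $\eta$ small still has its supremum approached only at the puncture and never attained in the punctured ball. Your argument uses nothing about $v$ near $0$ beyond continuity on the punctured ball and the definition $v(0)=\limsup_{x\goto 0}v(x)$, so it cannot distinguish $v$ from such an example; quantitatively, you would need a sequence $y_j\goto 0$ with $(v-q)(0)-(v-q)(y_j)=o(|y_j|^2)$, which does not follow from the limsup definition. As a side remark, the sign of your quadratic correction is backwards: $q_j=q+\langle a_j,\cdot-y_j\rangle-\eta_j|\cdot-y_j|^2$ \emph{adds} the convex term $\eta_j|x-y_j|^2$ to $v-q_j$, which pushes maxima away from $y_j$ rather than localizing them.

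The paper avoids this issue by not arguing at the level of $v$ alone. It returns to the approximants: after the correction $\hat{\tilde u}^{\e}=\tilde u^{\e}-\beta|x|^4$, the function $\hat{\tilde u}^{\e}-q$ has an honest interior maximizer $x^{\e}$ in $B_\delta(0)$ for small $\e$, and one distinguishes where $x^{\e}$ lies. If $x^{\e}\in\Omega_1\setminus\bar B_{\e}(0)$, the equation gives $F_k[q]\geq 1/2$; if $x^{\e}\in\partial B_{\e}(0)$, one touches from below with the explicit subsolution $\ul u^{\e}-\beta|x|^4$; and if $x^{\e}\in B_{\e}(0)$, where $\tilde u^{\e}\equiv -M$, the Hessian of $-\beta|x|^4$ gives $\lambda_{\min}(D^2q)\geq -12\beta|x^{\e}|^2$, hence $F_k[q]\geq -C|x^{\e}|^2\goto 0$. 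To complete your proof you must either reproduce an argument of this type for the approximating sequence, or invoke a removable-singularity theorem for bounded $k$-convex functions at an isolated point; as written, the origin case is not proved.
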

\begin{proof}
We will prove by contradiction. If $v(x)\notin\Phi^k(\Omega_1)$ then there exists a quadratic polynomial such that
$v(x_0)-q(x_0)=0,$ $v(x)-q(x)\leq 0$ for all $x\in B_\delta(x_0)\Subset\Omega_1,$ and $F_k[q]<0.$ Without loss of generality, we may also assume
$v(x)-q(x)<c_0<0$ on $\partial B_\delta(x_0).$ If not, we will consider $\hat v(x):=v(x)-\beta|x-x_0|^4$ instead, and correspondingly, we will replace
$\tilde u^\e$ by $\hat{\tilde{u}}^\e(x):=\tilde u^\e-\beta|x-x_0|^4$. Here, $\beta>0$ is a very small constant.

Case 1. When $x_0\neq 0,$ we may assume $0\notin B_\delta(x_0).$ Moreover, when $\e>0$ small, we also have $\bar B_\delta(x_0)\cap \bar B_\e(0)=\emptyset.$ Since $\tilde u^\e(x)\rightarrow v(x)$ uniformly in
$\bar B_\delta(x_0),$ we get for any $\eta>0$ there exists $\e_\eta>0$ such that when $\e<\e_\eta$ we have $\left|\tilde u^\e(x)-v(x)\right|<\eta$ for all $x\in \bar B_\delta(x_0).$
Therefore, we get $\tilde u^\e(x_0)-q(x_0)>-\eta$ and
$$\tilde u^\e(x)-q(x)<v(x)-q(x)<c_0<0\,\,\mbox{on $\partial B_\delta(x_0).$}$$
Here, the first inequality comes from as $\e\searrow 0,$ $\tilde u^\e\nearrow v(x).$
We can see that when $\eta<|c_0|,$ $\tilde u^\e(x)-q(x)$ achieves its local maximum in $B_\delta(x_0).$ Recall that $\tilde u^\e$ satisfies $F_k[\tilde u^\e]=1$ in $B_\delta(x_0),$ we have $F_k[q]\geq 1.$ This leads to a contradiction. When we consider $\hat{\tilde{u}}^\e(x)$, for $\beta>0$ small we have $F_k[\hat{\tilde{u}}^\e]\geq 1/2$ in $B_\delta(x_0),$ thus $F_k[q]\geq 1/2.$

Case 2. When $x_0=0,$ we recall that $v(0)=\limsup\limits_{x\rightarrow 0} v(x).$ For any $\eta>0$ small, there exists
$\{x_n\}\subset B_\delta(0)\setminus\{0\}$ and $x_n\rightarrow 0$ such that when $n>N,$ $|v(x_n)-v(0)|<\eta/3.$ We may also assume
when $n>N,$ $|q(x_n)-q(0)|<\eta/3.$ Moreover, same as in case 1 we have,
$$\tilde u^\e(x)-q(x)<v(x)-q(x)<c_0<0\,\,\mbox{on $\partial B_\delta(0).$}$$
Now, for any $n>N,$ we fix $x_n,$ then there exists $\e_1=\e_1(\eta, x_n)>0$ such that when $\e<\e_1$ we have
$|\tilde u^\e(x_{n})-v(x_{n})|<\eta/3.$
Therefore, for this $x_n\in B_\delta(0),$ when $\e>0$ small enough we get
\[|\tilde u^\e(x_n)-q(x_n)|<|\tilde u^\e(x_n)-v(x_n)|+|v(x_n)-v(0)|+|q(0)-q(x_n)|<\eta.\]
Choosing $\eta<|c_0|$ yields $\tilde u^\e-q(x)$ achieves its local maximum in $B_\delta(0).$ By virtue of Lemma \ref{hmlem1} we obtain $F_k[q]\geq 0,$ which leads to a contradiction.
Since in this case, when replacing $\tilde u^\e$ by $\hat{\tilde{u}}^\e$ the argument in the last step is not so straightforward, we will include the details below.

We will assume $\max_{\bar B_\delta(0)}\hat{\tilde{u}}^\e-q(x)$ is achieved at $x^\e.$

Subcase 1. There exists $\e_2>0$ such that for all $\e<\e_2,$ $|x^\e|>\eta_1>0.$ In this case we have when $\e>0$ small enough,
at $x^\e,$ $F_k[\hat{\tilde{u}}^\e]\geq 1/2.$ This implies $F_k[q]\geq 1/2,$ which leads to a contradiction.

Subcase 2. There exists a sequence $\{\e_i\}\goto 0$ such that $|x^{\e_i}|\goto 0.$
In the following, without causing confusions, we will denote the subsequence of $\{x^{\e_i}\}$ by $\{x^{\e_i}\}$ as well.

When $\{x^{\e_i}\}\subset\Omega_1\setminus\bar B_{\e_i}(0),$ we have $F_k[\hat{\tilde{u}}^{\e_i}]\geq 1/2$ at $x^{\e_i}.$ This gives
$F_k[q]\geq 1/2,$ which leads to a contradiction.

When $\{x^{\e_i}\}\subset\partial B_{\e_i}(0),$ we will consider $\underline u^{\e_i}-\beta|x-x_0|^4$ instead of $\hat{\tilde{u}}^\e.$
It's clear that $\underline u^{\e_i}-\beta|x-x_0|^4-q(x)$ achieves a local maximum at $x^{\e_i}.$ Therefore, at $x^{\e_i}$ we again have
$F_k[q]\geq 1/2,$ which leads to a contradiction.

When $\{x^{\e_i}\}\subset B_{\e_i}(0),$ we know that $\hat{\tilde{u}}^\e=-M-\beta|x-x_0|^4.$
Let $\lambda:=\lambda[D^2q]=(\lambda_1, \cdots, \lambda_n)$ be the eigenvalue vectors of $\{D^2 q\}$
and let $\lambda_{\min}=\min\{\lambda_1, \cdots, \lambda_n\}.$ Then at $x^{\e_i}$ we have
$\lambda_{\min}\geq -12\beta|x^{\e_i}|^2.$ Therefore, $F_k[q]\geq-C|x^{\e_i}|^2$ for some positive constant $C$ that is independent of $\e_i.$
Let $\e_i\goto 0$ we obtain $F_k[q]\geq 0,$ which leads to a contradiction.
\end{proof}

Now, let $\{u^{\e_m}\}$ be any sequence of solutions of \eqref{dp.1} with $\e_m\goto 0$ as $m\goto\infty.$ We denote $u^m:=\tilde u^{\e_m}_{h_m}$ to be the mollification of $\tilde u^{\e_m}.$ Moreover, as $m\rightarrow \infty$ we have
$\e_m, h_m\rightarrow 0.$ Here, we always assume $h_m\ll\e_m.$ Combining Lemma \ref{dp-c0-lem}, \ref{dp-c1interior-lem}, and \ref{hmlem3} with the fact that $v(0)$ is finite,
it is clear that
$\{u^m\}\subset\Phi^k(\Omega_1)\cap C^\infty(\Omega_1)$ converges in $L_{loc}^1(\Omega_1)\cap C^1_{loc}\left(\Omega_1\setminus\{0\}\right)$ to $v\in\Phi^k(\Omega_1).$
By Theorem 1.1 of \cite{TW99} we know $\mu_k[u^m]\rightharpoonup\mu_k[v].$ In view of the Portmanteau theorem this is equivalent to say, for any $B=B_r(x)\Subset\Omega_1$ we have
\[\mu_k[v](B)\leq\liminf\limits_{m\rightarrow\infty}\mu_k[u^m](B)\]
and
\[\mu_k[v](\bar B_{\sigma r})\geq\limsup\limits_{m\rightarrow\infty}\mu_k[u^m](\bar B_{\sigma r})\]
for any $\sigma\in (0, 1).$

\begin{lemma}
\label{hmlem4}
For $k\leq n/2,$ we have $\mu_k[v]=\nu_E$ in $\Omega_1,$ where $\nu_E$ is the standard measure on $\mathbb R^n.$
\end{lemma}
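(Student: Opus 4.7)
The plan is to decompose $\mu_k[\tilde u^{\e_m}]$ into an absolutely continuous part equal to Lebesgue measure on the ring $\Omega^{\e_m}$, a zero part on the open ball $B_{\e_m}(0)$, and a singular part $\mu^{\text{sing}}_m$ supported on the Lipschitz corner $\partial B_{\e_m}(0)$, and then to pass to the limit using the Portmanteau inequalities already recorded in the excerpt. I will work with $\mu_k[\tilde u^{\e_m}]$ rather than $\mu_k[u^m]$ because on $\Omega^{\e_m}$ one has the exact description $\mu_k[\tilde u^{\e_m}]|_{\Omega^{\e_m}} = \sigma_k(D^2 u^{\e_m})\,dx = dx$. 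Since $\tilde u^{\e_m} \to v$ in $L^1_{loc}(\Omega_1)$ with both functions in $\Phi^k(\Omega_1)$, Theorem~1.1 of \cite{TW99} gives $\mu_k[\tilde u^{\e_m}] \rightharpoonup \mu_k[v]$, so the same Portmanteau bounds apply with $\tilde u^{\e_m}$ in place of $u^m$.

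For any ball $B = B_r(x) \Subset \Omega_1 \setminus \{0\}$, once $m$ is large enough that $\bar B \subset \Omega^{\e_m}$, we have $\mu_k[\tilde u^{\e_m}](B) = |B|$ and $\mu_k[\tilde u^{\e_m}](\bar B_{\sigma r}(x)) = |B_{\sigma r}|$ for every $\sigma \in (0,1)$. Portmanteau for open sets gives $\mu_k[v](B) \leq |B|$, while Portmanteau for the closed balls $\bar B_{\sigma r}(x)$ followed by $\sigma \nearrow 1$ gives $\mu_k[v](B) \geq |B|$. Hence $\mu_k[v] = \nu_E$ on $\Omega_1 \setminus \{0\}$. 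Testing weak convergence against a smooth cutoff $\phi \in C_c^\infty(\Omega_1)$ with $\phi \equiv 1$ in a neighborhood of the origin then reduces the remaining claim $\mu_k[v](\{0\}) = 0$ to showing $\lim_{m \to \infty} \mu^{\text{sing}}_m(\partial B_{\e_m}) = 0$.

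To control $\mu^{\text{sing}}_m$, I reverse the direction of Portmanteau: for fixed $m$, the mollification $\tilde u^{\e_m}_h \to \tilde u^{\e_m}$ in $L^1_{loc}$ as $h \to 0^+$, so
\[
\mu_k[\tilde u^{\e_m}](B_{2\e_m}) \leq \liminf_{h \to 0^+} \int_{B_{2\e_m}} \sigma_k(D^2 \tilde u^{\e_m}_h)\,dx.
\]
Split the integration domain into $B_{\e_m - h}$ (where $\tilde u^{\e_m}_h \equiv -M$, zero contribution), the outer annulus $B_{2\e_m} \setminus B_{\e_m + h}$ (where $\tilde u^{\e_m}_h$ is the ordinary mollification of the smooth $u^{\e_m}$, so dominated convergence drives the integral to $|B_{2\e_m} \setminus \bar B_{\e_m}| \sim \e_m^n$), and the transition shell $S_h = B_{\e_m + h} \setminus B_{\e_m - h}$ of volume $\sim \e_m^{n-1}h$. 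Since $\tilde u^{\e_m}$ has a purely normal gradient jump of size $u^{\e_m}_\nu$ across $\partial B_{\e_m}$, mollification at scale $h$ produces exactly one blown-up eigenvalue of $D^2 \tilde u^{\e_m}_h$ in $S_h$, namely the normal-normal entry of size $O(|Du^{\e_m}|_{\partial B_{\e_m}}/h)$, while all remaining entries are $O(|D^2 u^{\e_m}|_{\partial B_{\e_m}})$. Inserting the sharp boundary estimates from Remark~\ref{dprmk1} and Lemma~\ref{dp-inside-c2-lem} and expanding $\sigma_k$ as a sum over principal $k$-minors yields
\[
\int_{S_h} \sigma_k(D^2 \tilde u^{\e_m}_h)\,dx \leq C\bigl(\e_m^{n-2k} + \e_m^{n-2k-1}h\bigr)
\]
when $n/k > 2$, with an extra factor of $|\log\e_m|^{-k}$ in the critical case $n/k = 2$. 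Both expressions tend to $0$ as $\e_m \to 0^+$ (with $h \ll \e_m$) precisely when $k \leq n/2$, so $\mu^{\text{sing}}_m \to 0$ and $\mu_k[v](\{0\}) = 0$.

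The main obstacle is the shell estimate in the threshold case $n = 2k$: without the logarithmic gain $|Du^{\e_m}| = O(\e_m^{-1}|\log\e_m|^{-1})$ coming from the Green-type supersolution $\phi = C\log|x| - C\log\e - M$ constructed in Subsection~\ref{dpsub2}, the shell bound would degrade to $O(1)$ rather than $o(1)$, and it is precisely this $|\log\e_m|^{-1}$ factor that makes the borderline case work. This is also the structural reason the argument breaks down for $2k > n$, consistent with the authors' remark that that regime remains open.
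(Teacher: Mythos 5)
Your proposal is correct, and it rests on the same quantitative mechanism as the paper's proof --- the corner contribution scales like $\e_m^{n-2k}$, with the extra $|\log\e_m|^{-k}$ gain at the threshold $n=2k$ coming from the logarithmic supersolution of Subsection \ref{dpsub2}, which is exactly where the hypothesis $k\le n/2$ enters --- but your route through the key estimate is genuinely different. The paper never analyzes the mollified Hessian in the transition shell: it fixes $h_m\ll\e_m$, splits a ball $B$ into $B\cap(\Omega_1\setminus B_{3\e_m/2})$, $B\cap(B_{3\e_m/2}\setminus\bar B_{\e_m/2})$ and $B\cap\bar B_{\e_m/2}$, and on the middle annulus uses the divergence structure of $\sigma_k$ to convert $\int F_k[u^m]$ into flux integrals $\int_{\partial B_{3\e_m/2}}\sigma_k^{ij}u^m_i\gamma_j\,d\sigma$ and $\int_{\partial B_{\e_m/2}}\sigma_k^{ij}u^m_i\gamma_j\,d\sigma$, which sit at distance comparable to $\e_m\gg h_m$ from the corner and are therefore controlled purely by Remark \ref{dprmk1} and Lemmas \ref{dp-inside-c2-lem} and \ref{c2-global-lem}. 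You instead isolate the singular part of $\mu_k[\tilde u^{\e_m}]$ on $\partial B_{\e_m}$ and bound it by a direct pointwise estimate of $\sigma_k(D^2\tilde u^{\e_m}_h)$ in the shell $S_h$, exploiting the near--rank-one structure of the mollified normal-derivative jump; both give the same power of $\e_m$, and your version makes the geometric source of the scaling (a jump of size $u^{\e_m}_\nu\sim\e_m^{-1}$ spread over area $\e_m^{n-1}$, paired with $k-1$ eigenvalues of size $\e_m^{-2}$) more transparent, at the cost of extra work the paper's integration by parts avoids. Two details you would need to write out: (i) the distributional Hessian of the Lipschitz function $\tilde u^{\e_m}$ consists of the absolutely continuous part plus the surface measure $u^{\e_m}_\nu\,\nu\otimes\nu\,d\mathcal H^{n-1}$ on $\partial B_{\e_m}$, and its mollification is only \emph{approximately} rank one --- its remaining eigenvalues are $O(u^{\e_m}_\nu h/\e_m^2)$, so the terms of $\sigma_k$ containing two or more factors of the singular part must be checked to be of lower order (they are, since $h\ll\e_m$); and (ii) the bound $|D^2u^{\e_m}|\lesssim\e_m^{-2}$ is needed in an $h$-neighborhood of $\partial B_{\e_m}$, not only on the sphere itself, which is supplied by the global estimate of Lemma \ref{c2-global-lem} rather than by Lemma \ref{dp-inside-c2-lem} alone. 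With those points filled in, your argument is a valid alternative proof.
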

\begin{proof}
We only need to prove for any $B=B_r(x)\Subset\Omega_1$ we have $\mu_k[v](B)=\nu_E(B).$
Let $A_m=B_{3\e_m/2}(0)\setminus \bar B_{\e_m/2}(0)$ then
\[B=\{B\cap(\Omega_1\setminus B_{3\e_m/2})\}\cup\{B\cap A_m\}\cup\{B\cap\bar B_{\e_m/2}\}:=I_1\cup I_2\cup I_3\]
Now, for any $\eta>0,$ by the well known properties of mollifications we know there exists $h_{\eta}>0$ such that
when $0<h_m<h_\eta,$
\[
\begin{aligned}
|u^m-\tilde u^{\e_m}|&<\eta\,\,\mbox{in $\{\bar\Omega_1\setminus B_{4\e_m/3}\}\cup\bar B_{2\e_m/3}$}\\
|Du^m-D\tilde u^{\e_m}|&<\eta\,\,\mbox{in $\{\bar\Omega_1\setminus B_{4\e_m/3}\}\cup\bar B_{2\e_m/3}$}\\
|D^2u^m-D^2\tilde u^{\e_m}|&<\eta\,\,\mbox{in $\{\bar\Omega_1\setminus B_{4\e_m/3}\}\cup\bar B_{2\e_m/3}$}.
\end{aligned}
\]
Therefore, for $n/k>2$ we have
\be\label{h,.1}
\begin{aligned}
\mu_k[u^m](B)&=\int_{I_1}F_k[u^m]dx+\int_{I_2}F_k[u^m]dx+\int_{I_3}F_k[u^m]dx\\
&\leq\int_{I_1}\sigma_k(D^2\tilde u^{\e_m}+\eta I)+\int_{\partial B_{3\e_m/2}(0)}\sigma_k^{ij}u_i^m\gamma_jd\sigma\\
&-\int_{\partial B_{\e_m/2}}\sigma_k^{ij}u_i^m\gamma_jd\sigma+\int_{I_3}\sigma_k(D^2\tilde u^{\e_m}+\eta I)dx\\
&\leq \nu_E(I_1)+C_1\sum\limits_{j=0}^{k-1}\eta^{k-j}\int_{I_1}\sigma_j[\tilde u^{\e_m}]\\
&+C_2\e_m^{n-1}\frac{1}{|\e_m^2-\eta|^{k-1}|\e_m-\eta|}+C_3\eta^k\e_m^n.
\end{aligned}
\ee
Here, $\gamma$ is the unit exterior normal to $\partial A_m.$ Moreover, we have used the divergence theorem to derive the first inequality, and we have used Remark \ref{dprmk1},
Lemma \ref{dp-inside-c2-lem}, and Lemma \ref{c2-global-lem} to derive the second inequality.
We also note that $C_1, C_2,$ and $C_3$ are positive constant only depending on $n, k,$ and $\Omega_1.$ Now let $\eta\leq\e_m^{2k}$ then we have
\[\mu_k[u^m](B)\leq\nu_E(B)+C_4\e_m^{n-2k}+C_5\eta\e_m^{-2(k-1)}\] for some $C_4, C_5>0$ are independent of the choice of $B.$
Therefore, we obtain $\mu_k[v](B)\leq\nu_E(B).$
Similarly, we can show that for any $\sigma\in (0, 1)$
\[\mu_k[v](\bar B_{\sigma r})\geq\nu_E(\bar B_{\sigma r}).\]
Let $\sigma\rightarrow 1$ we obtain $\mu_k[v](B)\geq\nu_E(B).$ When $n/k=2$ the proof is similar. This completes the proof of this Lemma.
\end{proof}

\bigskip

\section{Proof of Theorem \ref{thm1}.}
\label{pf}
\begin{proof}
The strategy of the proof follows \cite{HNS16}. We have proved the existence of a solution $u^\e$
of equation \eqref{dp.1}. The uniquesness of this solution follows directly from the maximum principle. We want to show that $u^\e$ has some non-convex sublevel sets for some $\e>0$ small enough.
In Section \ref{hm} we have shown that $\tilde u^\e\rightarrow v$ in $L^1_{loc}(\Omega_1)\cap C_{loc}^1(\Omega_1\setminus\{x_0\})$ and $v$ satisfies
\[
\begin{aligned}
\mu_k[v]&=1\,\,\mbox{in $\Omega_1$}\\
v&=0\,\,\mbox{on $\partial\Omega_1$}.
\end{aligned}
\]
By the uniqueness of weak solutions, which is proved in Lemma 4.2 of \cite{TW02}, we get $v=\psi$ in $\bar\Omega_1.$
Here, $\psi$ has been defined in Subsection \ref{dpsub1}.
Now, let us assume by contradiction that for each $\e>0$ all sublevel sets of $\tilde u^\e$ are convex. Then there exists a sequence
$\{\e_n\}\subset (0, \e_0)$ such that $\e_n\rightarrow 0$ as $n\rightarrow\infty,$ and for each $n\in N$ the sublevel sets of
$\tilde u^{\e_n}$ are all convex. Fix a point $y\in\Omega_1$ such that
\[\psi(y)=\inf\limits_{\bar\Omega_1}\psi=-M_0,\] we may choose $x_0\neq y$ such that $\psi(x_0)>-M_0.$ In the following we consider
\eqref{dp.1} in the domain $\Omega_1\setminus B_\e(x_0).$
Let $(x_n)_{n\in\mathbb N}$ be any sequence of points in $\bar\Omega_1$ such that $x_n\in B_{\e_n}(x_0)$ for all $n\in\mathbb N.$

Since $y\neq x_0,$ we know that $\tilde u^{\e_n}(y)\rightarrow \psi(y)$ as $n\rightarrow\infty.$ Therefore there exists $n_0\in\mathbb N$ such that for
$n\geq n_0$ we have $\tilde u^{\e_n}(y)\leq\psi(y)+\eta.$
Here $\eta>0$ is an arbitrary positive real number. Moreover, $\tilde u^{\e_n}(x_n)=-M<-M_0=\psi(y).$
By our assumption that the sublevel sets of $\tilde u^{\e_n}$ in $\bar\Omega_1$ are convex, we get
\[\tilde u^{\e_n}(x)\leq\psi(y)+\eta\,\,\mbox{for all $x\in [x_n, y].$}\]
By our choice of $x_n$ we know $x_n\goto x_0$ as $n\goto\infty.$ One infers that
$\psi(x)\leq\psi(y)+\eta$ for all $x\in (x_0, y].$ Then by the continuity of $\psi$ we get
$\psi(x_0)\leq\psi(y)+\eta.$ Since $\eta>0$ is arbitrary we conclude $\psi(x_0)\leq -M_0,$
which is ruled out by the choice of $x_0.$ Thus the Theorem is proved.
\end{proof}

\bigskip

\end{document}